\def\MM{\mathcal{M}}
\def\pone{\mathbbm{1}}
\newtheorem{theorem}{Theorem}[section]
\newtheorem{lemma}[theorem]{Lemma}
\newtheorem{remark}[theorem]{Remark}
\begin{document}
\title{Spaces of locally homogeneous affine surfaces}
\author{M. Brozos-V\'{a}zquez \, E. Garc\'{i}a-R\'{i}o\, P. Gilkey}
\address{MBV: Universidade da Coru\~na, Differential Geometry and its Applications Research Group, Escola Polit\'ecnica Superior, 15403 Ferrol,  Spain}
\email{miguel.brozos.vazquez@udc.gal}
\address{EGR: Faculty of Mathematics, University of Santiago de Compostela, 15782 Santiago de Compostela, Spain}
\email{eduardo.garcia.rio@usc.es}
\address{PBG: Mathematics Department, University of Oregon, Eugene OR 97403-1222, USA}
\email{gilkey@uoregon.edu}
\thanks{Supported by project MTM2016-75897-P (Spain).}
\begin{abstract}
We examine the topology of various spaces of locally homogeneous affine manifolds which arise from the classification result of Opozda \cite{Op04} as orbits of the action of $GL(2,\mathbb{R})$ (Type~$\mathcal{A}$) and the $ax+b$ group (Type~$\mathcal{B}$).
We determine the topology of the spaces of Type~$\mathcal{A}$ models in relation to
the rank of the Ricci tensor. We determine the topology of the spaces of
Type~$\mathcal{B}$ models which either are flat or where the Ricci tensor is alternating.
\end{abstract}
\keywords{Homogeneous affine surface, linear equivalence, Ricci tensor}
\subjclass[2010]{53A15, 53C05, 53B05}
\maketitle

\section{Introduction}

\subsection{Notational conventions}
An {\it affine surface} is a pair $\mathcal{M}=(M,\nabla)$ where $M$ is a smooth surface and where $\nabla$
is a torsion free connection on the tangent bundle of $M$. 
Let $x=(x^1,x^2)$ be a system of local coordinates on $M$.
Adopt the {\it Einstein convention} and sum over repeated indices to express 
$\nabla_{\partial_{x^i}}\partial_{x^j}=\Gamma_{ij}{}^k\partial_{x^k}$. The {\it Christoffel symbols} $\Gamma=\{\Gamma_{ij}{}^k\}$
determine the connection in the coordinate chart. Let $\rho$ be the associated Ricci tensor.
The Ricci tensor carries the geometry in dimension $2$; an affine surface is flat if and only if $\rho=0$.
Since the Ricci tensor of an affine manifold is not necessarily symmetric, let $\rho_s(X,Y)=\frac{1}{2}\{\rho(X,Y)+\rho(Y,X)\}$ and  $\rho_a(X,Y)=\frac{1}{2}\{\rho(X,Y)-\rho(Y,X)\}$ be the \emph{symmetric} and \emph{alternating} Ricci tensors.

\subsection{Locally homogeneous affine surface geometries}
Work of Opozda \cite{Op04} shows that any locally homogeneous affine surface $\mathcal{M}$ 
is modeled on one of the following geometries.
\begin{itemize}
\item {\bf \underline{Type~$\mathcal{A}$}.} $\mathcal{M}=(\mathbb{R}^2,\nabla)$
with constant Christoffel symbols $\Gamma_{ij}{}^k=\Gamma_{ji}{}^k$. 
This geometry is homogeneous; the Type~$\mathcal{A}$ connections are the
left invariant connections on the Lie group $\mathbb{R}^2$.
\item {\bf \underline{Type~$\mathcal{B}$}.} $\mathcal{M}=(\mathbb{R}^+\times\mathbb{R},\nabla)$ with Christoffel symbols 
$\Gamma_{ij}{}^k=(x^1)^{-1}A_{ij}{}^k$ where $A_{ij}{}^k=A_{ji}{}^k$ is constant. This geometry is homogeneous;
the Type~$\mathcal{B}$ connections are the left invariant connections on the $ax+b$ group.
\item {\bf \underline{Type~$\mathcal{C}$}.} $\mathcal{M}=(M,\nabla)$ where $\nabla$ is the Levi-Civita connection of
the round sphere $S^2$.
\end{itemize}

This result has been applied by many authors. Kowalski and Sekizawa~\cite{KS14} used it to examine Riemannian extensions of 
affine surfaces, Vanzurova~\cite{V13} used it to study the metrizability of locally homogeneous affine surfaces, and  
D\v usek~\cite{Du10} used it to
study homogeneous geodesics. 
It plays a central role in the study of locally homogeneous connections with torsion of 
Arias-Marco and Kowalski~\cite{AMK08} (see also \cite{BVGRG2019} for a unified treatment independently of the torsion tensor). 
Although we will work with the local theory, the compact setting has been examined in \cite{AG14,Opozda}.

The Ricci tensor $\rho$ of an affine surface determines the full curvature tensor. 
In Section~\ref{section-type-A}, we examine the spaces where the Ricci tensor has fixed rank
in the Type~$\mathcal{A}$ setting. In
Section~\ref{section-type-B}, we consider the spaces where either the Ricci tensor vanishes identically
or where the Ricci tensor is alternating and non-trivial in the Type~$\mathcal{B}$ setting.

\subsection{Type~$\mathcal{A}$ geometries}\label{S1.3}
Let $\mathcal{M}(a,b,c,d,e,f):=(\mathbb{R}^2,\nabla)$ where the Christoffel symbols of $\nabla$ are constant and given
by
\begin{equation}\label{E1.a}\begin{array}{llllll}
\Gamma_{11}{}^1=a,&\Gamma_{11}{}^2=b,&\Gamma_{12}{}^1=\Gamma_{21}{}^1=c,\\
\Gamma_{12}{}^2=\Gamma_{21}{}^2=d,&\Gamma_{22}{}^1=e,&\Gamma_{22}{}^2=f\,.
\end{array}\end{equation}
This identifies the set of Type~$\mathcal{A}$ geometries with $\mathbb{R}^6$. 
The linear transformations 
$T(x^1,x^2)=(a^1_1 x^1 + a^1_2 x^2, a^2_1 x^1+a^2_2 x^2)$ where 
$(a_i^j)\in\operatorname{GL}(2,\mathbb{R})$ act on the set of Type~$\mathcal{A}$ geometries.
We say that two Type~$\mathcal{A}$ surface models are \emph{linearly equivalent} if
there exists $T\in\operatorname{GL}(2,\mathbb{R})$  intertwining the two structures.
One has that two Type~$\mathcal{A}$  surfaces  with non-degenerate Ricci tensor are affine equivalent if and only if they are linearly equivalent
(see \cite{BGG18}). On the contrary, there  exist Type~$\mathcal{A}$ surfaces with degenerate Ricci tensor 
which are not linearly equivalent
but which nevertheless are affine equivalent. We refer to the discussion in \cite{GV18} for further details.

 We consider the induced action of $GL(2,R)$ on $\mathbb{R}^6$ 
and identify the linear orbit of a Type~$\mathcal{A}$ model $\mathcal{M}$ with 
$\mathcal{S}(\mathcal{M})=GL(2,\mathbb{R})/\mathcal{I}(\mathcal{M})$ where $\mathcal{I}(\mathcal{M})$ is the isotropy group
$\mathcal{I}(M)=\{ T\in GL(2,\mathbb{R}); T^*\mathcal{M}=\mathcal{M}\}$.

It was shown in  \cite{GV18} that any  flat  Type~$\mathcal{A}$ model is linearly equivalent to one
of the following:
 \begin{equation}
 \label{eq:models0}
  \begin{array}{ll}
 \MM_0^0:=\mathcal{M}(0,0,0,0,0,0),&
 \MM_1^0:=\mathcal{M}(1,0,0,1,0,0),\\
 \noalign{\medskip}
 \MM_2^0:=\mathcal{M}(-1,0,0,0,0,1), &
 \MM_3^0:=\mathcal{M}(0,0,0,0,0,1),\\
 \noalign{\medskip}
 \MM_4^0:=\mathcal{M}(0,0,0,0,1,0), &
 \MM_5^0:=\mathcal{M}(1,0,0,1,-1,0).
 \end{array}
 \end{equation}
The structure $\MM_0^0$ is a singular cone point. The next result shows the remaining orbits $\mathcal{S}(\mathcal{M}_i^0):=\operatorname{GL}(2,\mathbb{R})\cdot\mathcal{M}_i^0$
for $1\le i\le 5$ glue together to define
a smooth 4-dimensional submanifold of $\mathbb{R}^6$.
Let $\pone$ be the trivial line bundle
over the circle $S^1$, let $\mathbb{L}$ be the M\"obius line bundle over $S^1$, and
let $\mathcal{A}^0\subset\mathbb{R}^6\setminus\{0\}$ be the set of all flat Type~$\mathcal{A}$ geometries other than the cone point $\mathcal{M}_0^0$.

\begin{theorem}\label{T1.1}
$\mathcal{A}^0$ is a smooth submanifold of $\mathbb{R}^6\setminus\{0\}$ diffeomorphic to the total space of
$\mathbb{L}\oplus\pone\oplus\pone$ 
minus the zero section.
\end{theorem}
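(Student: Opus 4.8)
The plan is to avoid an orbit-by-orbit analysis and instead produce one global real-analytic parametrization of $\mathcal A^0$. The first step is to compute the Ricci tensor: since the Christoffel symbols are constant, the curvature operator $R(\partial_{x^1},\partial_{x^2})$ is a fixed quadratic polynomial in $\Gamma$, and a direct computation shows that $\rho$ is symmetric with
\[
\rho_{12}=\rho_{21}=\det N,\qquad
\begin{pmatrix}\rho_{11}\\ \rho_{22}\end{pmatrix}=N\begin{pmatrix}a-d\\ f-c\end{pmatrix},
\qquad N:=\left(\begin{smallmatrix}d&b\\ e&c\end{smallmatrix}\right).
\]
Hence $\MM(a,b,c,d,e,f)$ is flat if and only if $\det N=0$ and $(a-d,f-c)\in\ker N$. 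On $\mathcal A^0$ this forces $\operatorname{rk}N\in\{0,1\}$, and one checks that the loci $\{\operatorname{rk}N=1\}$ and $\{\operatorname{rk}N=0\}$ are $GL(2,\mathbb R)$-invariant, the latter being $\{b=c=d=e=0,\ (a,f)\neq0\}=\mathcal S(\MM_2^0)\cup\mathcal S(\MM_3^0)$ and the former $\mathcal S(\MM_1^0)\cup\mathcal S(\MM_4^0)\cup\mathcal S(\MM_5^0)$.

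The second step is the parametrization. On the locus $\operatorname{rk}N=1$, write $N$ as an outer product $N=\mathbf u\,\mathbf v^{t}$ with $\mathbf u=(u_1,u_2)$ and $\mathbf v=(v_1,v_2)$ in $\mathbb R^{2}\setminus\{0\}$; then $\ker N=\mathbb R\,\mathbf v^{\perp}$ with $\mathbf v^{\perp}=(-v_2,v_1)$, so $(a-d,f-c)=t\,\mathbf v^{\perp}$ for a unique $t\in\mathbb R$, and the relations $d=u_1v_1$, $c=u_2v_2$ then recover $a$ and $f$. Normalizing $|\mathbf v|=1$ reduces the scaling ambiguity $(\mathbf u,\mathbf v,t)\mapsto(\lambda\mathbf u,\lambda^{-1}\mathbf v,\lambda t)$ to the free involution $\sigma(\mathbf v,\mathbf u,t)=(-\mathbf v,-\mathbf u,-t)$, and yields the real-analytic map
\[
\Psi\colon S^{1}\times\bigl(\mathbb R^{3}\setminus\{0\}\bigr)\longrightarrow\mathbb R^{6},\qquad
\Psi(\mathbf v;\mathbf u,t)=\bigl(u_1v_1-tv_2,\ u_1v_2,\ u_2v_2,\ u_1v_1,\ u_2v_1,\ u_2v_2+tv_1\bigr),
\]
where $\mathbb R^{3}$ carries the coordinates $(u_1,u_2,t)$.

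The third step is to verify that $\Psi$ has the expected properties: (i) its image is exactly $\mathcal A^0$, with the subset $\{\mathbf u=0\}$ of the domain accounting for the stratum $\operatorname{rk}N=0$ and the deletion of $0\in\mathbb R^{3}$ being precisely what excises the cone point; (ii) its fibres are exactly the $\sigma$-orbits; (iii) it is an immersion, the only delicate cases being $\mathbf u=0$ and $t=0$, which are settled by a short Jacobian rank computation; and (iv) it is proper, which is immediate from $b^{2}+c^{2}+d^{2}+e^{2}=|\mathbf u|^{2}$ together with the identity $t=-av_2+fv_1-(u_2-u_1)v_1v_2$. Granting (i)--(iv), $\Psi$ descends to a proper injective immersion, hence an embedding, of $\bigl(S^{1}\times(\mathbb R^{3}\setminus\{0\})\bigr)/\sigma$ onto $\mathcal A^0$, exhibiting $\mathcal A^0$ as a smooth submanifold of $\mathbb R^{6}\setminus\{0\}$ diffeomorphic to that quotient.

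The last step is to recognize $\bigl(S^{1}\times(\mathbb R^{3}\setminus\{0\})\bigr)/\sigma$ as the complement of the zero section in the rank-$3$ real vector bundle over $S^{1}$ obtained from the connected double cover $S^{1}\to S^{1}$ and the fibrewise action of $\mathbb Z/2$ on $\mathbb R^{3}$ by $-\id$. Since $-\id\in GL(3,\mathbb R)$ lies in the non-identity component, this bundle is non-orientable; rank-$3$ bundles over $S^{1}$ are classified by the first Stiefel--Whitney class, so it is isomorphic to $\mathbb L\oplus\pone\oplus\pone$ (equivalently to $\mathbb L\oplus\mathbb L\oplus\mathbb L$, since $\mathbb L\oplus\mathbb L\cong\pone\oplus\pone$), and the asserted diffeomorphism follows. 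The main obstacle I anticipate is part (iii) together with the surjectivity half of (i): one must check that the reorganized flatness equations genuinely cut out all of $\mathcal A^0$ and that no smoothness is lost where the two strata meet, i.e. along the lower-dimensional orbits $\mathcal S(\MM_2^0),\mathcal S(\MM_3^0),\mathcal S(\MM_4^0)$.
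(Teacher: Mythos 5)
Your argument is correct in its essentials but reaches the conclusion by a genuinely different mechanism than the paper. The paper makes a linear change of variables $(a,\dots,f)\mapsto(p,q,t,s,v,w)$, solves the flatness equations explicitly for $(p,q)$ in terms of the remaining variables when $(v,w)\ne(0,0)$, and then passes to polar coordinates $v=r\cos\theta$, $w=r\sin\theta$ to absorb the degenerate stratum $v=w=0$ (where the equations collapse to $p^2+q^2=s^2+t^2$) into a single smooth parametrization by $(\theta,r,s,t)$ with the gluing $(\theta,r)\sim(\theta+\pi,-r)$. You instead observe that $\rho$ is assembled from the matrix $N=\left(\begin{smallmatrix}d&b\\ e&c\end{smallmatrix}\right)$ via $\rho_{12}=\det N$ and $(\rho_{11},\rho_{22})^t=N(a-d,f-c)^t$, and parametrize the solution set by rank-one factorizations $N=\mathbf u\,\mathbf v^t$ with $|\mathbf v|=1$; the two strata $\operatorname{rk}N=1$ and $N=0$ are then uniformly covered by one map, with the involution $\sigma=(-\id)$ on all four fiber coordinates rather than on one. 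I checked your Ricci identity against Equation~\eqref{E1.b}, the injectivity of $\Psi$ modulo $\sigma$, the rank-$4$ Jacobian computation (which in fact works uniformly, with no delicate subcases at $\mathbf u=0$ or $t=0$), and the properness estimate; all are sound, and the final bundle identification via $w_1$ is fine since $\mathbb L^{\oplus 3}\cong\mathbb L\oplus\pone\oplus\pone$. What your route buys is a conceptual explanation of why the answer is a rank-$3$ bundle over $S^1$ minus its zero section (the $S^1$ is the projectivized row space of $N$); what the paper's route buys is brevity and explicit formulas tied to its list of normal forms.

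One genuine error, though an inessential one: the claim that the loci $\{\operatorname{rk}N=1\}$ and $\{\operatorname{rk}N=0\}$ are $\operatorname{GL}(2,\mathbb R)$-invariant, with $\{N=0,\ (a,f)\ne0\}=\mathcal S(\MM_2^0)\cup\mathcal S(\MM_3^0)$, is false. The set $\{b=c=d=e=0\}$ is only $2$-dimensional, while $\mathcal S(\MM_2^0)$ is $4$-dimensional (its isotropy group is finite); concretely, the shear $(x^1,x^2)\mapsto(x^1,x^1+x^2)$ applied to $\MM_2^0$ produces a nonzero $\Gamma_{12}{}^2$, so the orbit leaves $\{N=0\}$. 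The matrix $N$ is not a natural object under coordinate changes, so its rank stratification has no $\operatorname{GL}(2,\mathbb R)$-equivariant meaning. Fortunately nothing in steps (i)--(iv) or the bundle identification uses this remark, so you should simply delete it; the rest of the proof stands.
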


The Ricci tensor of any Type~$\mathcal{A}$ model is symmetric. 
Let $\mathcal{A}^{1}_{\pm}\subset\mathbb{R}^6$ be the set of all Type~$\mathcal{A}$ geometries where the Ricci tensor has rank 1 and is
positive semi-definite ($+$) or negative semi-definite $(-)$.
Any element in $\mathcal{A}^{1}_{\pm}$ is linearly equivalent to one of the following, where 
$c\in \mathbb{R}$ and $c_1\in \mathbb{R}\setminus \{0,-1\}$ (see \cite{BGG18, GV18}):
\begin{equation}\label{eq:models1}
 \begin{array}{ll}
 \MM_1^1:=\mathcal{M}(-1,0,1,0,0,2),\\[0.03in]
 \MM_2^1(c_1):=\mathcal{M}(-1,0,c_1,0,0,1+2c_1),\\[0.03in]
  \MM_3^1(c_1):=\mathcal{M}(0,0,c_1,0,0,1+2c_1),\\[0.03in]
 \MM_4^1(c):=\mathcal{M}(0,0,1,0,c,2),\\[0.03in]
  \MM_5^1(c):=\mathcal{M}(1,0,0,0,1+c^2,2c).
 \end{array}
\end{equation}
We will see in Lemma~\ref{L4.2} that the orbit structure of the action of 
$\operatorname{GL}(2,\mathbb{R})$ on $\mathcal{A}^1_{\pm}$
is quite complicated. It is therefore, perhaps, a bit surprising that the set of all orbits $\cup_{i,c}\mathcal{M}_i^1(c)\cdot\operatorname{GL}(2,\mathbb{R})$
is smooth as shown in the following result. 

\begin{theorem}\label{T1.2}
 $\mathcal{A}^{1}_{\pm}$ is a smooth submanifold of $\mathbb{R}^6$ diffeomorphic to
$S^1\times S^1\times\mathbb{R}^3$.
\end{theorem}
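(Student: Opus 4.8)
The plan is to realize the Ricci tensor as a $\operatorname{GL}(2,\mathbb{R})$-equivariant quadratic map, get smoothness from a regular-value computation, and then fix the diffeomorphism type by reducing along this map to a single fibre. For $\mathcal{M}(a,b,c,d,e,f)$ the Christoffel symbols are constant, so $\rho_{jk}=\Gamma_{jk}{}^{m}\Gamma_{im}{}^{i}-\Gamma_{jm}{}^{i}\Gamma_{ik}{}^{m}$, and a short computation gives
\[
\rho_{11}=d(a-d)+b(f-c),\qquad \rho_{12}=\rho_{21}=cd-be,\qquad \rho_{22}=e(a-d)+c(f-c),
\]
so $R:=(\rho_{11},\rho_{12},\rho_{22})\colon\mathbb{R}^6\to\mathbb{R}^3$ is quadratic and equivariant for the $\operatorname{GL}(2,\mathbb{R})$-action on $\mathbb{R}^6$ of \S\ref{S1.3} and the standard action on symmetric $(0,2)$-tensors. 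A nonzero symmetric $2\times2$ matrix of rank $1$ is positive or negative semidefinite according to the sign of its trace, so the locus $\{R\neq0,\ \rho_{11}\rho_{22}-\rho_{12}^{2}=0\}$ is the disjoint union of $\mathcal{A}^1_+$ (where in addition $\rho_{11}+\rho_{22}>0$) and $\mathcal{A}^1_-$ (where $\rho_{11}+\rho_{22}<0$), and it suffices to treat $\mathcal{A}^1_+$, the argument for $\mathcal{A}^1_-$ being the same. A direct computation shows that $d(\rho_{11}\rho_{22}-\rho_{12}^{2})$ is nowhere zero on $\{R\neq0,\ \rho_{11}\rho_{22}-\rho_{12}^{2}=0\}$, so this locus is a smooth hypersurface in the open set $\{R\neq0\}\subset\mathbb{R}^6$, hence each of $\mathcal{A}^1_+$ and $\mathcal{A}^1_-$ is a smooth $5$-dimensional submanifold of $\mathbb{R}^6$.

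Next I would use equivariance. The group $\operatorname{GL}(2,\mathbb{R})$ acts transitively on the set $\Sigma_+$ of rank-one positive semidefinite symmetric forms on $\mathbb{R}^2$; writing such a form as $v\otimes v$ identifies $\Sigma_+\cong(\mathbb{R}^2\setminus\{0\})/\{\pm1\}\cong S^1\times\mathbb{R}$. Fix $\rho_0=\operatorname{diag}(0,1)\in\Sigma_+$, with isotropy group $H=\{\left(\begin{smallmatrix}\delta&\beta\\0&\epsilon\end{smallmatrix}\right):\epsilon=\pm1,\ \delta\neq0,\ \beta\in\mathbb{R}\}$, so that $\operatorname{GL}(2,\mathbb{R})/H\cong\Sigma_+$, and set $\mathcal{F}:=R^{-1}(\rho_0)$, an $H$-invariant set. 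The orbit map gives a smooth $\operatorname{GL}(2,\mathbb{R})$-equivariant map $\Phi\colon\operatorname{GL}(2,\mathbb{R})\times_{H}\mathcal{F}\to\mathbb{R}^6$ with image $\mathcal{A}^1_+$; it is injective (its value determines the $H$-coset of the group factor through $R$, and then the point of $\mathcal{F}$) and an immersion (the kernel of $d\Phi$ is the infinitesimal $H$-action, which is already zero in the associated-bundle tangent space). Since $\operatorname{GL}(2,\mathbb{R})\times_{H}\mathcal{F}$ and $\mathcal{A}^1_+$ are both smooth $5$-manifolds, a bijective immersion between them is a diffeomorphism, so $\mathcal{A}^1_+\cong\operatorname{GL}(2,\mathbb{R})\times_{H}\mathcal{F}$.

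To finish I would identify $\mathcal{F}$. Imposing $\rho_{11}=\rho_{12}=0$ and $\rho_{22}=1$ on the formulas above forces $b=d=0$: if $d\neq0$, eliminating $a-d$ from $\rho_{11}=0$ turns $\rho_{22}$ into a multiple of $\rho_{12}=0$, contradicting $\rho_{22}=1$; then $\rho_{12}=0=\rho_{11}$ give $b=0$. Hence $\mathcal{F}=\{b=d=0,\ ea+cf-c^{2}=1\}$ is, in the coordinates $(a,c,e,f)$, the affine quadric $\{ea+cf-c^{2}=1\}\subset\mathbb{R}^4$. The quadratic form $ea+cf-c^{2}$ is nondegenerate of signature $(2,2)$, so after a linear change of variables $\mathcal{F}=\{x^{2}+y^{2}-z^{2}-t^{2}=1\}$, and slicing over the $(z,t)$-plane exhibits $\mathcal{F}\cong S^1\times\mathbb{R}^2$. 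Thus $\mathcal{A}^1_+$ is the total space of a bundle over $\operatorname{GL}(2,\mathbb{R})/H\cong S^1\times\mathbb{R}$ with fibre $S^1\times\mathbb{R}^2$.

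The main obstacle is the final step: identifying this bundle with $S^1\times S^1\times\mathbb{R}^3$. One must compute the $H$-action on $\mathcal{F}$ from the transformation rule of the Christoffel symbols under elements of $H$ and then verify that the bundle carries no essential twisting — a priori it could have Möbius-type monodromy as in Theorem~\ref{T1.1}, so this cancellation needs an argument. The cleanest route is probably to bypass the associated-bundle description and write down one explicit diffeomorphism $S^1\times S^1\times\mathbb{R}^3\xrightarrow{\ \sim\ }\mathcal{A}^1_+$ directly, by composing a parametrization of the Ricci tensor (the $S^1\times\mathbb{R}$ factor $\Sigma_+$) with a coherent choice of point in the fibre (the $S^1\times\mathbb{R}^2$ factor $\mathcal{F}$) and the linear maps putting a geometry into one of the normal forms $\mathcal{M}_i^1(\cdot)$ of \eqref{eq:models1}; the coincidences among those normal forms recorded in Lemma~\ref{L4.2} are exactly what makes such a gluing consistent and twist-free. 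The case $\mathcal{A}^1_-$ is handled identically, starting from $\rho_0=\operatorname{diag}(0,-1)$, whose fibre is the quadric $\{x^{2}+y^{2}-z^{2}-t^{2}=-1\}\cong S^1\times\mathbb{R}^2$.
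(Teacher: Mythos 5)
Your setup is a legitimate equivariant repackaging of the paper's argument: the paper first normalizes $\rho$ to a multiple of $dx^2\otimes dx^2$ (its Lemma~\ref{L3.1} is your computation that $b=d=0$ on the fibre), identifies that slice with $S^1\times\mathbb{R}^+\times\mathcal{D}^2$ (your quadric $\{x^2+y^2-z^2-t^2=\pm1\}\cong S^1\times\mathbb{R}^2$), and then glues these slices over the circle recording the direction of $\ker\rho$ (your base $\operatorname{GL}(2,\mathbb{R})/H\cong S^1\times\mathbb{R}$). Up to that point your argument is correct modulo routine verifications you assert but do not carry out; in particular both your regular-value claim and the immersion property of $\Phi$ ultimately rest on $R$ being a submersion along the rank-one locus, which should be checked (by equivariance, at the normal forms of Equation~\eqref{eq:models1}).

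The genuine gap is the step you yourself flag as ``the main obstacle'': you stop at ``$\mathcal{A}^1_{+}$ is the total space of a bundle over $S^1\times\mathbb{R}$ with fibre $S^1\times\mathbb{R}^2$'' and never determine which bundle it is. That determination is the actual content of the theorem, and it is not a formality. In the paper's coordinates the monodromy around the base circle is nontrivial: the gluing is $(\phi,\theta,r,U,V)\sim(\phi+\pi,\theta+\pi,r,-U,-V)$, a half-rotation of the fibre circle combined with $-\operatorname{id}$ on an $\mathbb{R}^2$ factor, and the product structure emerges only after the substitution $\tilde\theta=\theta+\phi$ absorbs the half-rotation and the isomorphism $\mathbb{L}\oplus\mathbb{L}\cong\pone\oplus\pone$ trivializes the remaining sign. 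In your language, one must compute the action of the four components of $H$ on $\mathcal{F}$ (via the transformation law of the Christoffel symbols) and exhibit an isotopy of the resulting clutching map to the identity; without that, an orientation-reversing identification of the fibre circle --- which would replace the $S^1\times S^1$ factor by a Klein bottle --- has not been excluded. Appealing to ``the coincidences recorded in Lemma~\ref{L4.2}'' is a pointer to where the answer lives, not an argument, so the conclusion $\mathcal{A}^1_{\pm}\cong S^1\times S^1\times\mathbb{R}^3$ is not yet established.
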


 The remaining geometries where the Ricci tensor has rank 2
form an open subset $\mathbb{R}^6\setminus\{\{0\}\cup\mathcal{A}^0\cup\mathcal{A}^1_+\cup\mathcal{A}^{1}_-\}$.

These results should be contrasted with the results in \cite{kv03} where it is shown that any Type~$\mathcal{A}$ affine surface is linearly equivalent to a surface determined by at most two non-zero parameters.

\subsection{Type~$\mathcal{B}$ geometries}\label{S1.4}
Let 
$\mathcal{N}(a,b,c,d,e,f):=(\mathbb{R}^+\times\mathbb{R},\nabla)$ where the Christoffel symbols of $\nabla$ are given by
\begin{equation}\label{E1.c}\begin{array}{llllll}
\displaystyle\Gamma_{11}{}^1=\frac a{x^1},&\displaystyle\Gamma_{11}{}^2=\frac b{x^1},&
\displaystyle\Gamma_{12}{}^1=\Gamma_{21}{}^1=\frac c{x^1},\\
\Gamma_{12}{}^2=\Gamma_{21}{}^2=\displaystyle\frac d{x^1},&\Gamma_{22}{}^1=\displaystyle\frac e{x^1},&
\displaystyle\Gamma_{22}{}^2=\frac f{x^1}\,.
\end{array}\end{equation}
This identifies the space of Type~$\mathcal{B}$ geometries with $\mathbb{R}^6$.

The natural structure group here is not the full general linear group, but rather the $ax+b$ group.
We let $T_{a,b}(x^1,x^2):=(x^1,ax^2+bx^1)$ define an action of the $ax+b$ group on $\mathbb{R}^+\times\mathbb{R}$;
this acts on the Type~$\mathcal{B}$ geometries by reparametrization and defines the natural notion of linear equivalence
in this setting. 
Thus, two Type~$\mathcal{B}$ models $\mathcal{N}_1$ and $\mathcal{N}_2$ are said to be \emph{linearly equivalent} if and only if there exists  an affine transformation of the form
$\Psi(x^1,x^2)=(x^1, a^2_1 x^1+ a^2_2 x^2)$ for $ a^2_2\ne0$ intertwining the two structures.
It follows from the work in \cite{BGG18, BGG16} that two Type~$\mathcal{B}$ surfaces which are { neither flat
nor} of Type~$\mathcal{A}$
are affine isomorphic if and only if they are linearly isomorphic.
This is a non-trivial observation as there are non-linear affine transformations from one model to another if the dimension of the space of affine Killing vector fields is $4$-dimensional { or if the geometry is flat and thus the dimension of the space
of affine Killing vector fields is $6$-dimensional}.

It was shown in \cite{GV18a} that a flat Type~$\mathcal{B}$ model is linearly equivalent to one of the following models:
 $$
 \begin{array}{ll}
 \mathcal{N}^0_0:=\mathcal{N}(0,0,0,0,0,0), &\mathcal{N}^0_1(\pm):=\mathcal{N}(1,0,0,0,\pm1,0),
 \\
 \noalign{\medskip}
 \mathcal{N}^0_2(c_1):=\mathcal{N}(c_1-1,0,0,c_1,0,0), \, c_1\ne0, &\mathcal{N}^0_3:=\mathcal{N}(-2,1,0,-1,0,0),
 \\
 \noalign{\medskip}
 \mathcal{N}^0_4:=\mathcal{N}(0,1,0,0,0,0), &\mathcal{N}^0_5:=\mathcal{N}(-1,0,0,0,0,0),
 \\
 \noalign{\medskip}
 \mathcal{N}^0_6(c_2):=\mathcal{N}(c_2,0,0,0,0,0),\, c_2\ne0,-1.&
 \end{array}
 $$

Let $\mathcal{B}^0\subset\mathbb{R}^6$ be the space of flat Type~$\mathcal{B}$ geometries other
than the cone point $\mathcal{N}_0^0$ determined by the origin in $\mathbb{R}^6$.
Unlike the Type~$\mathcal{A}$ setting described in Theorem~\ref{T1.1},
$\mathcal{B}^0$ is not a smooth manifold but consists of the union of 3 smooth submanifolds of $\mathbb{R}^6$
which intersect transversally along the union of 3 smooth curves in $\mathbb{R}^6$.
Define
\begin{equation}\label{eq:paramet1}
\begin{array}{ll}
\mathcal{U}_1(r,s):=\mathcal{N}(1+rs^2,-s(1+rs^2),rs,-rs^2,r,-rs),&\mathcal{B}_1:=\operatorname{Range}\{\mathcal{U}_1\},\\[0.05in]
\mathcal{U}_2(u,v):=\mathcal{N}(u,v,0,0,0,0),&\mathcal{B}_2:=\operatorname{Range}\{\mathcal{U}_2\},\\[0.05in]
\mathcal{U}_3(u,v):=\mathcal{N}(u,v,0,1+u,0,0),&\mathcal{B}_3:=\operatorname{Range}\{\mathcal{U}_3\}.
\end{array}
\end{equation}
\medbreak\noindent 

\begin{theorem}\label{T1.5}
$\mathcal{B}^0
=\mathcal{B}_1\cup\mathcal{B}_2\cup\mathcal{B}_3$.
$\mathcal{B}_2$ and $\mathcal{B}_3$ are closed smooth surfaces
in $\mathbb{R}^6$ which are diffeomorphic to $\mathbb{R}^2$ and which intersect transversally along the
curve $\mathcal{N}(-1,v,0,0,0,0)$ for $v\in\mathbb{R}$. 
$\mathcal{B}_1$ can be completed to a smooth closed surface $\tilde{\mathcal{B}}_1$
which intersects $\mathcal{B}_2$ transversally along the curve
$\mathcal{N}(1,v,0,0,0,0)$ and which intersects $\mathcal{B}_3$
transversally along the curve $\mathcal{N}(0,v,0,1,0,0)$ for $v\in\mathbb{R}$.
\end{theorem}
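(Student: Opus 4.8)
The plan is to extract the structure of $\mathcal{B}^0$ directly from the list of normal forms $\mathcal{N}^0_0,\dots,\mathcal{N}^0_6$ together with the $ax+b$-orbit of each, and then package these orbits into the three families $\mathcal{U}_1,\mathcal{U}_2,\mathcal{U}_3$. First I would compute, for a general point $\mathcal{N}(a,b,c,d,e,f)$, the action of $T_{\alpha,\beta}$ on the six parameters; since $T_{\alpha,\beta}(x^1,x^2)=(x^1,\alpha x^2+\beta x^1)$ this is an affine-linear (in fact polynomial) substitution, and pulling the connection back gives an explicit rational map on $\mathbb{R}^6$. Applying this to each of the seven normal forms produces an explicit parametrization of each orbit; I would then check that the union of the orbits of $\mathcal{N}^0_1(\pm),\mathcal{N}^0_3,\mathcal{N}^0_4$ fills out $\mathcal{B}_1$ (so that $\mathcal{U}_1(r,s)$ is a global chart on it after removing the locus where the $ax+b$-normalization degenerates), that the orbits of $\mathcal{N}^0_5,\mathcal{N}^0_6(c_2)$ together with the cone-adjacent pieces fill out $\mathcal{B}_2=\{\mathcal{N}(u,v,0,0,0,0)\}$, and that the orbit of $\mathcal{N}^0_2(c_1)$ fills $\mathcal{B}_3=\{\mathcal{N}(u,v,0,1+u,0,0)\}$. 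The identity $\mathcal{B}^0=\mathcal{B}_1\cup\mathcal{B}_2\cup\mathcal{B}_3$ is then just the statement that every flat Type~$\mathcal{B}$ model lies in one of these orbit-families, which is Opozda/\cite{GV18a} repackaged.

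Next I would verify the smoothness and diffeomorphism-type claims. For $\mathcal{B}_2$ and $\mathcal{B}_3$ this is immediate: the maps $\mathcal{U}_2(u,v)=\mathcal{N}(u,v,0,0,0,0)$ and $\mathcal{U}_3(u,v)=\mathcal{N}(u,v,0,1+u,0,0)$ are affine embeddings $\mathbb{R}^2\hookrightarrow\mathbb{R}^6$, so their ranges are closed $2$-planes (hence closed smooth surfaces diffeomorphic to $\mathbb{R}^2$), and one reads off that $\mathcal{U}_2(u,v)=\mathcal{U}_3(u',v')$ forces $d$-coordinate $0=1+u$, i.e. $u=-1$, $u'=-1$, $v=v'$, giving the common curve $\mathcal{N}(-1,v,0,0,0,0)$; transversality there is checked by confronting the two tangent $2$-planes (spanned by $\partial_u,\partial_v$ of each map) and seeing they span a $3$-dimensional subspace of $\mathbb{R}^6$, i.e. the sum of the tangent spaces is $3$-dimensional while the surfaces themselves are $2$-dimensional, which is the correct transversality count inside the $3$-dimensional locus they jointly sweep. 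For $\mathcal{B}_1$ I would analyze the map $\mathcal{U}_1(r,s)$: its range is a smooth surface away from the boundary values of $s$ (and $r$), and the completion $\tilde{\mathcal{B}}_1$ is obtained by adjoining the limiting orbits as $s\to\infty$ (or equivalently rechoosing the $ax+b$-normalization); I would show $\mathcal{U}_1$ is an immersion by computing the $6\times 2$ Jacobian and exhibiting a nonvanishing $2\times 2$ minor, and that it is injective, so that $\tilde{\mathcal{B}}_1$ is a smooth closed surface.

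Finally I would pin down the two remaining transverse intersections. Setting $\mathcal{U}_1(r,s)=\mathcal{N}(1,v,0,0,0,0)$ forces $rs=0$ from the third coordinate and $rs^2=0$ from the sixth together with $1+rs^2=1$; tracking this through the remaining four equations isolates the curve $r=0$, where $\mathcal{U}_1(0,s)=\mathcal{N}(1,-s,0,0,0,0)$, matching $\mathcal{N}(1,v,0,0,0,0)$ with $v=-s$. Similarly $\mathcal{U}_1(r,s)=\mathcal{N}(0,v,0,1,0,0)$ forces $1+rs^2=0$ and $-rs^2=1$ (consistent) plus $rs=0$ from the third coordinate, which combined with $rs^2=-1$ is impossible unless one instead approaches this locus through $\tilde{\mathcal{B}}_1$'s completing chart; there the matching curve comes out as $\mathcal{N}(0,v,0,1,0,0)$. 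In each case transversality is the same $2+2$-into-$3$ tangent-space computation as before. \emph{The main obstacle} I anticipate is the bookkeeping around $\tilde{\mathcal{B}}_1$: the parametrization $\mathcal{U}_1(r,s)$ is adapted to a particular $ax+b$-normalization of $\mathcal{N}^0_1,\mathcal{N}^0_3,\mathcal{N}^0_4$ and does not extend as a global chart, so proving that $\mathcal{B}_1$ \emph{can} be completed to a closed smooth surface — and that the completed surface meets $\mathcal{B}_3$ along the claimed curve rather than missing it — requires either a second chart for $\tilde{\mathcal{B}}_1$ near $s=\infty$ or an intrinsic description of $\tilde{\mathcal{B}}_1$ as a suitable quotient/closure; getting the two charts to overlap consistently and checking smoothness across the seam is where the real work lies, whereas the $\mathcal{B}_2,\mathcal{B}_3$ statements are essentially linear algebra.
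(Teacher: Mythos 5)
Your architecture is sound and, for $\mathcal{B}_2$ and $\mathcal{B}_3$, your argument (affine embeddings of $\mathbb{R}^2$, intersection forced by $0=1+u$, tangent planes spanning a $3$-dimensional space and meeting in the tangent line of the curve) is exactly the content of the paper's proof. For the identity $\mathcal{B}^0=\mathcal{B}_1\cup\mathcal{B}_2\cup\mathcal{B}_3$ you take a genuinely different route: you propose to sweep out the three surfaces as unions of $ax+b$-orbits of the normal forms $\mathcal{N}^0_i$, importing the classification of \cite{GV18a}, whereas the paper is self-contained here: it clears denominators in Equation~\eqref{E1.d}, notes $\tilde\rho_{12}-\tilde\rho_{21}=c+f$ forces $f=-c$, and solves $\tilde\rho=0$ directly in the two cases $e\ne0$ (yielding $\mathcal{U}_1$) and $e=0$ (yielding $\mathcal{U}_2$ and $\mathcal{U}_3$ from the factorization $d(1+u-d)=0$). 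The paper's route is shorter and avoids orbit bookkeeping, which in your version needs care: for instance $\mathcal{N}^0_4=\mathcal{N}(0,1,0,0,0,0)$ already lies on $\mathcal{B}_2$, so the assignment of normal forms to the three surfaces is not as clean as your sketch suggests. Still, your approach would work in principle.

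The genuine gap is in the last assertion. You correctly deduce that $\mathcal{U}_1(r,s)=\mathcal{N}(0,v,0,1,0,0)$ is impossible (it requires $rs=0$ and $rs^2=-1$ simultaneously), so the intersection of $\tilde{\mathcal{B}}_1$ with $\mathcal{B}_3$ lies entirely in the completing chart --- and then you leave that chart unconstructed, asserting only that ``the matching curve comes out as $\mathcal{N}(0,v,0,1,0,0)$.'' Exhibiting that chart \emph{is} the content of the claim that $\mathcal{B}_1$ can be completed to a smooth surface meeting $\mathcal{B}_3$ transversally; without it nothing in your argument shows the closure of $\operatorname{Range}\{\mathcal{U}_1\}$ is even a manifold near that curve, let alone identifies the curve or its transversality. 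The paper first runs the limit analysis (any sequence $\mathcal{U}_1(r_n,s_n)$ converging into $\operatorname{Range}\{\mathcal{U}_3\}$ has $r_n\to0$ and $r_ns_n^2\to-1$, so $r_n<0$ eventually) and is thereby led to the explicit substitution $r=-t^2$, $s=t^{-1}+w$, under which
$$\mathcal{U}_1\left(-t^2,\tfrac1t+w\right)=\mathcal{N}\bigl(-tw(2+tw),\,w(2+3tw+t^2w^2),\,-t(1+tw),\,(1+tw)^2,\,-t^2,\,t(1+tw)\bigr),$$
a map that extends smoothly across $t=0$ and hits $\mathcal{N}(0,2w,0,1,0,0)$ there. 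You need to supply this (or an equivalent second chart), check it overlaps smoothly with $\mathcal{U}_1$ on $r<0$, and then run your tangent-space count at $t=0$. As written, the hardest third of the theorem is flagged as ``where the real work lies'' but not actually carried out.
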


In the Type~$\mathcal{B}$ setting, it is possible for the symmetric Ricci tensor 
$\rho_s$ to vanish without the geometry being flat;
this is not possible in the Type~$\mathcal{A}$ setting. The alternating Ricci tensor, $\rho_a$, carries the geometry
in this context. 


Let $\mathcal{B}_a$ be the set of all
 Type~$\mathcal{B}$ structures where $\rho_s=0$ but $\rho_a\ne0$. Set
\begin{equation}
\label{eq:paramet2}
\begin{array}{ll}
\mathcal{V}_1(r,s,t):=\mathcal{N}(s, t,  r, 0, 0, r),\\[0.05in]
\mathcal{V}_2(u,v,w):=\mathcal{N}( 1 - 2 u w + v w^2,  w (1 - u w + v w^2),   u - v w,  -v w^2, v,  u + v w)
\end{array}
\end{equation}
and let $\mathcal{D}_1:= \operatorname{Range}\{\mathcal{V}_1\}$ and $\mathcal{D}_2:= \operatorname{Range}\{\mathcal{V}_2\}$.
\begin{theorem}\label{T1.7} $\mathcal{B}_a=\mathcal{D}_1\cup \mathcal{D}_2$. $\mathcal{V}_i$ defines smoothly embedded 
 3-dimensional submanifolds of $\mathbb{R}^6$ for
 $r\ne0$ and $u\ne0$ which intersect transversally along a smooth 2-dimensional submanifold. 
\end{theorem}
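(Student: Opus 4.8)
The plan is to reduce the statement to an analysis of the common zero locus of the symmetric Ricci tensor. First I would record the Ricci tensor of a general Type~$\mathcal{B}$ model: for $\mathcal{N}(a,b,c,d,e,f)$ one has $\rho_{ij}=(x^1)^{-2}P_{ij}$ with the $P_{ij}$ explicit polynomials of degree at most $2$ in $(a,\dots,f)$, and a direct computation gives $P_{12}-P_{21}=c+f$, while $\rho_s=0$ if and only if
\[
\begin{aligned}
P_{11}&=d(a-d+1)+b(f-c)=0,\qquad P_{22}=e(a-d-1)+c(f-c)=0,\\
P_{12}+P_{21}&=(f-c)+2(cd-be)=0 .
\end{aligned}
\]
Hence $\rho_a\ne0$ is the \emph{open} condition $c+f\ne0$, and $\mathcal{B}_a$ is the locally closed subset of $\mathbb{R}^6$ cut out by the three displayed equations together with $c+f\ne0$.

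Next I would verify $\mathcal{D}_1\cup\mathcal{D}_2\subseteq\mathcal{B}_a$ by substituting $\mathcal{V}_1$ and $\mathcal{V}_2$ into the three equations; along the way $c+f=2r$ on $\mathcal{D}_1$ and $c+f=2u$ on $\mathcal{D}_2$, so the restrictions $r\ne0$ and $u\ne0$ are precisely $\rho_a\ne0$. The heart of the argument is the reverse inclusion, obtained by elimination. I would split on whether $f=c$. If $f=c$, then $P_{12}+P_{21}=0$ gives $cd=be$ and $P_{11}=0$ gives $d(a-d+1)=0$; since $c+f=2c\ne0$ forces $c\ne0$, the case $d\ne0$ is ruled out, so $d=0$, and then either $e=0$, giving the point $\mathcal{V}_1(c,a,b)$ of $\mathcal{D}_1$, or $e\ne0$, in which case $P_{22}=0$ and $P_{12}+P_{21}=0$ force $a=1$ and $b=0$, giving $\mathcal{V}_2(c,e,0)\in\mathcal{D}_2$. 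If $f\ne c$, then $e=0$ would make $P_{22}=0$ force $c=0$ and hence $P_{12}+P_{21}=f-c\ne0$, a contradiction; so $e\ne0$. Putting $u=\tfrac12(c+f)$, $v=e$, $w=\tfrac{f-c}{2e}$, equation $P_{22}=0$ yields $a-d-1=-2cw$, and then $P_{11}=0$ and $P_{12}+P_{21}=0$ pin down $d=-vw^2$ and the remaining $\mathcal{V}_2$-values of $a$ and $b$, so the point lies in $\mathcal{D}_2$. This gives $\mathcal{B}_a=\mathcal{D}_1\cup\mathcal{D}_2$. I expect this elimination --- arranging the case split so that no component of the variety is missed and every case collapses onto one of the two normal forms --- to be the main obstacle.

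It remains to check the differential-topological claims. That $\mathcal{V}_1$ on $r\ne0$ is a smooth embedding is immediate: it is injective ($r=c$, $s=a$, $t=b$), an immersion, and its image is the locally closed set $\{d=e=0,\ c=f\ne0\}$. For $\mathcal{V}_2$ on $u\ne0$, the Jacobian restricted to the $(c,e,f)$-coordinates has determinant $2v$, which settles the immersion property on $\{v\ne0\}$, and on $\{v=0\}$ one checks the three partials directly; injectivity follows by recovering $u=\tfrac12(c+f)\ne0$, $v=e$, and then $w$ from $f-c=2vw$ when $v\ne0$ or from $a=1-2uw$ when $v=0$, and a short compactness argument shows $\mathcal{V}_2$ is proper onto its image. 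Hence both $\mathcal{D}_1$ and $\mathcal{D}_2$ are smoothly embedded $3$-submanifolds.

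Finally, intersecting the defining data: a point of $\mathcal{D}_1$ has $d=e=0$, which forces $v=e=0$ in $\mathcal{V}_2$, so $\mathcal{D}_1\cap\mathcal{D}_2$ is the $v=0$ slice $\{\mathcal{N}(1-2uw,\,w(1-uw),\,u,\,0,\,0,\,u):u\ne0\}$, a smoothly embedded surface (inside $\mathcal{D}_1$ it is the graph $t=\tfrac{1-s^2}{4r}$). For the transversality statement, at such a point $p$ I would compute $T_p\mathcal{D}_1=\{d=e=0,\ c=f\}$ and the span of $\partial_u\mathcal{V}_2,\partial_v\mathcal{V}_2,\partial_w\mathcal{V}_2$ at $(u,0,w)$: one finds $T_p\mathcal{D}_1\cap T_p\mathcal{D}_2$ is the $2$-plane spanned by $\partial_u\mathcal{V}_2$ and $\partial_w\mathcal{V}_2$, which equals $T_p(\mathcal{D}_1\cap\mathcal{D}_2)$, while $\partial_v\mathcal{V}_2$ has nonzero $e$-component and so lies outside $T_p\mathcal{D}_1$. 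Thus the intersection is clean, $T_p\mathcal{D}_1\ne T_p\mathcal{D}_2$, and $T_p\mathcal{D}_1+T_p\mathcal{D}_2$ has the expected dimension $3+3-2=4$; this is the sense in which $\mathcal{D}_1$ and $\mathcal{D}_2$ meet transversally along the surface $\mathcal{D}_1\cap\mathcal{D}_2$, and in particular $\mathcal{D}_1\cup\mathcal{D}_2$ is not a manifold there.
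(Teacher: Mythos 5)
Your proof is correct and follows essentially the same route as the paper: impose $\rho_s=0$ together with $c+f\ne0$, eliminate by cases (you split on $f=c$ versus $f\ne c$ where the paper splits on $e=0$ versus $e\ne0$, which is immaterial), and then verify that $\mathcal{V}_1,\mathcal{V}_2$ are injective immersions whose images meet along the $v=0$ slice --- indeed your treatment of the embedding, properness, and transversality claims is more careful than the paper's. The only step stated too tersely is ruling out $d\ne0$ in the case $f=c$: the reason is not merely $c\ne0$ but the chain $d\ne 0\Rightarrow a=d-1\Rightarrow P_{22}=-2e=0\Rightarrow e=0\Rightarrow cd=be=0\Rightarrow d=0$ (using $c\ne0$ only at the last step); with that line added the argument is complete.
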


\section{The space of Type~$\mathcal{A}$ models}\label{section-type-A}
Let $\mathcal{M}(a,b,c,d,e,f):=(\mathbb{R}^2,\nabla)$ { be given
by Equaton~\eqref{E1.a} where the parameters $(a,b,c,d,e,f)$ are real constants}.
The { associated} Ricci tensor is symmetric.

\subsection{The space of flat Type~$\mathcal{A}$ models}
Since the Ricci tensor determines the curvature in dimension two, flat surfaces are determined by a vanishing Ricci tensor. We provide the proof of the first result of the paper as follows.

%

\begin{proof}[The proof of Theorem~\ref{T1.1}]
Let $\theta\in[0,2\pi]$ be the usual periodic parameter where we identify $0$ with $2\pi$ to define
the circle $S^1=(\cos\theta,\sin\theta)$. 
Let $(x^1,x^2,x^3)$ be a point of $\mathbb{R}^3$. The bundle $\mathbb{L}\oplus\pone\oplus\pone$ is
then defined by identifying $(\theta,x^1,x^2,x^3)$ with $(\theta+\pi,-x^1,x^2,x^3)$; this puts the necessary
half twist in the first $x$-coordinate. We require that $(x^1,x^2,x^3)$ belongs to $\mathbb{R}^3-\{0\}$ to remove the 0-section.

The parametrization of Equation~(\ref{E1.a}) is not a very convenient one for studying the Ricci tensor.
We make a linear change of coordinates on $\mathbb{R}^6$ and let 
$\MM_1(p,q,t,s,v,w)$ be defined by
$$\begin{array}{lll}
\Gamma_{11}{}^1=2 q,&\Gamma_{11}{}^2= p + t,&\Gamma_{12}{}^1=\Gamma_{21}{}^1=w,\\
\Gamma_{12}{}^2=\Gamma_{21}{}^2= q + s,&\Gamma_{22}{}^1=v,&\Gamma_{22}{}^2= p - t\,.
\end{array}$$
We substitute these values in Equation~(\ref{E1.b}) to obtain
$$
\rho=\left(\begin{array}{cc}
p^2 + q^2 - s^2 - t^2 - p w - t w&-(p + t) v + (q + s) w\\
-(p + t) v + (q + s) w&q v - s v + (p - t - w) w\end{array}\right)\,.
$$
We set $\rho=0$. If $v^2+w^2\ne0$, we obtain
\begin{equation}\label{E2.a}\begin{array}{l}
p=(v^2+w^2)^{-1}\{2 s v w+t(w^2-v^2)+w^3\},\text{ and}\\
q=(v^2+w^2)^{-1}\{s(v^2-w^2)+v w (2 t+w)\}\,.
\end{array}\end{equation}
If $v^2+w^2=0$, we obtain a single equation
\begin{equation}\label{E2.b}
p^2+q^2-s^2-t^2=0\,.
\end{equation}
We introduce polar coordinates $v=r\cos(\theta)$ and $w=r\sin(\theta)$ to remove the singularity at $(v,w)=(0,0)$ in
Equation~(\ref{E2.a}). We may then
combine Equation~(\ref{E2.a}) and Equation~(\ref{E2.b}) into a single expression:
\begin{equation}\label{E2.c}\begin{array}{l}
p=p(\theta,r,s,t):=r \sin ^3(\theta )+s \sin (2 \theta )-t \cos (2 \theta ),\\[0.05in]
q=q(\theta,r,s,t):=r \cos (\theta) \sin^2 (\theta)+s \cos (2 \theta )+t \sin (2 \theta )\,.
\end{array}\end{equation}
We assume $(r,s,t)\ne (0,0,0)$ to avoid the trivial { structure $\mathcal{M}_0^0$}
as the parametrization of Equation~(\ref{E2.c}) is singular there.
We have $\theta\in[0,2\pi]$ and $(r,s,t)\in\mathbb{R}^3-\{0\}$; since we are permitting $r$ to
be negative in polar coordinates, we must identify $(\theta,r)$ with $(\theta+\pi,-r)$ and obtain thereby the bundle
$\mathbb{L}\oplus\pone\oplus\pone$ minus the zero section over $[0,\pi]$. 
\end{proof}

\begin{remark}\rm
  The isotropy subgroups of the { structures} $\mathcal{M}_i^0$ { vary} with $i$
  { and the dimension of the orbit space varies correspondingly}. 
  We list below  the associated isotropy subgroups.
   $$
  \begin{array}{l}
  \mathcal{I}(\MM_0^0)=\operatorname{GL}(2,\mathbb{R}),\\
  \noalign{\medskip}
  \mathcal{I}(\MM_1^0)=\left\{T:T(x^1,x^2)=(x^1,a x^2) \text{ for } a\ne0\right\},\\
  \noalign{\medskip}
  \mathcal{I}(\MM_2^0)
  =\left\{id, T\right\}, \text{ where } T(x^1,x^2)=(-x^2,-x^1), \\
  \noalign{\medskip}
  \mathcal{I}(\MM_3^0)=\left\{T:T(x^1,x^2)=(a x^1,x^2)\text{ for } a\ne0\right\},
  \\
  \noalign{\medskip}
  \mathcal{I}(\MM_4^0)=\left\{T:T(x^1,x^2)=(a^2 x^1+bx^2,ax^2)\text{ for }a\ne0,\ b\in\mathbb{R}\right\}, \\
  \noalign{\medskip}
  \mathcal{I}(\MM_5^0)=\left\{T:T(x^1,x^2)=(x^1,\pm x^2)\right\}. 
  \end{array}
  $$
\end{remark}

\subsection{The space of Type~$\mathcal{A}$ models with rank-one Ricci tensor}
If the Ricci tensor has rank 1, we can make a linear
change of coordinates to ensure $\rho$ is a multiple of $dx^2\otimes dx^2$. { We first
establish Theorem~\ref{T1.2}. We then examine the isotropy groups of the models in Equation \eqref{eq:models1}
 to determine the orbits of the Type~$\mathcal{A}$ models which are not Type~$\mathcal{B}$.}

\begin{lemma}\label{L3.1}
 Let $\mathcal{M}$ be a Type~$\mathcal{A}$ model which is not flat.
 Then $\rho$ is a multiple of $dx^2\otimes dx^2$ if and only if $b=0$ and $d=0$.
\end{lemma}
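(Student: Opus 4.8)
The plan is to compute the Ricci tensor directly from the Christoffel symbols in Equation~\eqref{E1.a} and read off the conditions under which $\rho$ is proportional to $dx^2\otimes dx^2$, i.e. under which $\rho_{11}=\rho_{12}=\rho_{21}=0$. Since the connection has constant coefficients, the curvature formula $R_{ij}{}^k{}_l=\partial_i\Gamma_{jl}{}^k-\partial_j\Gamma_{il}{}^k+\Gamma_{im}{}^k\Gamma_{jl}{}^m-\Gamma_{jm}{}^k\Gamma_{il}{}^m$ reduces to the purely quadratic terms $\Gamma_{im}{}^k\Gamma_{jl}{}^m-\Gamma_{jm}{}^k\Gamma_{il}{}^m$, so the Ricci tensor $\rho_{jl}=R_{ij}{}^i{}_l$ is a homogeneous quadratic polynomial in $(a,b,c,d,e,f)$. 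I would first write out $\rho_{11}$, $\rho_{12}=\rho_{21}$ (recall the Type~$\mathcal{A}$ Ricci tensor is symmetric), and $\rho_{22}$ explicitly — this is the Equation~\eqref{E1.b} referenced in the proof of Theorem~\ref{T1.1}, which I will assume is available.

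Next I would establish the easy direction: if $b=0$ and $d=0$, then inspection of the explicit expressions for $\rho_{11}$ and $\rho_{12}$ shows that every monomial appearing in them contains a factor of $b$ or $d$, hence $\rho_{11}=\rho_{12}=0$ and $\rho$ is automatically a multiple of $dx^2\otimes dx^2$ (possibly the zero multiple, but that is excluded by the hypothesis that $\mathcal{M}$ is not flat). This is a finite check once $\rho$ is written down.

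For the converse — the direction where the hypothesis of non-flatness is actually used — I would suppose $\rho_{11}=0$ and $\rho_{12}=0$ and deduce $b=d=0$. Here the argument is not a single linear solve, since $\rho_{11}$ and $\rho_{12}$ are quadratic; instead I expect to argue that $\rho_{11}$ and $\rho_{12}$ factor, each through a common factor involving $b$ and $d$. Concretely, one anticipates $\rho_{11}$ being a multiple (by a polynomial in the parameters) of an expression like $(bc-ad-\dots)$ together with terms forcing $b$ or $d$, and similarly for $\rho_{12}$; the non-flatness hypothesis guarantees that the remaining factor cannot vanish identically, so the vanishing of $\rho_{11}$ and $\rho_{12}$ forces $b=0$ and $d=0$. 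The cleanest route is probably to solve the system $\{\rho_{11}=0,\ \rho_{12}=0,\ \rho_{22}\neq 0\}$ and show its solution set coincides with $\{b=0,\ d=0,\ \rho_{22}\neq 0\}$; this can be done by case analysis on whether various subexpressions vanish.

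The main obstacle is organizing the converse direction cleanly: because $\rho_{11}$ and $\rho_{12}$ are quadratic rather than linear in the six parameters, one cannot simply invert a matrix, and a naive case split threatens to balloon. The trick is to exploit that the $GL(2,\mathbb{R})$-action lets us normalize, or — more elementarily — to notice the factorization structure of $\rho_{11}$ and $\rho_{12}$ so that $b=d=0$ drops out after dividing by a factor that is nonzero precisely because $\rho\neq 0$. I would set up the computation with a computer algebra check of the factorizations, then present the short human-readable argument: $\rho_{11}$ and $\rho_{12}$ both lie in the ideal generated by $b$ and $d$ modulo the flat locus, and conversely vanishing of the $(1,1)$ and $(1,2)$ entries together with non-flatness pins down $b=d=0$.
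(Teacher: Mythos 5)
Your plan follows the paper's proof essentially verbatim: write out $\rho$ explicitly, observe that every monomial of $\rho_{11}=(a-d)d+b(f-c)$ and $\rho_{12}=cd-be$ contains $b$ or $d$ (giving the easy direction), and for the converse show that the solution set of $\{\rho_{11}=0,\ \rho_{12}=0,\ \rho_{22}\ne0\}$ is exactly $\{b=d=0,\ \rho_{22}\ne0\}$ by case analysis. One caution: the ``common factor'' mechanism you anticipate does not materialize --- $\rho_{11}$ and $\rho_{12}$ do not share a nonvanishing factor off the flat locus; what actually happens (and what the paper does) is that by homogeneity one normalizes $d=1$ (resp.\ $b=1$), solves the two equations for $c$ and $a$ (resp.\ $e$ and the combination $f-c+ad-d^2$), and then checks that these constraints force $\rho_{22}=0$ as well, contradicting non-flatness. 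So your fallback route is the correct one and the case split is short (only the two cases $d\ne0$ and $b\ne0$), not the ballooning analysis you feared.
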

\begin{proof}
 A direct computation shows 
 \begin{equation}\label{E1.b}
 \rho=\left(\begin{array}{cc}
 (a-d) d+b (f-c) & c d-b e \\
 c d-b e & c(f-c)+(a-d) e \\
 \end{array}\right)\,.
 \end{equation} 
 Consequently, if $b=0$ and if $d=0$, then $\rho$ is a multiple of $dx^2\otimes dx^2$. Conversely, assume $\rho$ is a
 multiple of $dx^2\otimes dx^2$ or, equivalently, $-bc+ad-d^2+bf=0$ and $cd-be=0$.  We wish to show $b=d=0$.
 \smallbreak{\bf Case 1.} Suppose that $d\ne0$. The equations
 are homogeneous so we may assume $d=1$ and hence $c=be$. Substituting these values yields $\rho_{11}=-1+a-b^2e+bf=0$.
 Thus $a=1+b^2e+bf$. This yields $\rho=0$ so this case is impossible as we assumed $\MM$ was not flat.
 \smallbreak{\bf Case 2.} Suppose that $b\ne0$. Again, we may assume $b=1$ so $e=cd$. We compute 
 $\rho_{11}=f-c+ad-d^2$. Setting this to zero again yields $\rho=0$ which is impossible. 
\end{proof}
 
%

\begin{proof}[Proof of Theorem~\ref{T1.2}]
 Let $\mathcal{A}^{1}_{\pm,0}$ be the space of all Type~$\mathcal{A}$ models where the Ricci tensor is a non-zero multiple of 
 $dx^2\otimes dx^2$
 where the $\pm$ refers to whether $\rho_{22}$ is positive or negative.
 By Lemma~\ref{L3.1}, we set $b=d=0$ and obtain $\rho_{22}=-c^2+ae+cf$.
 We make a change of variables setting
 $$\begin{array}{llllll}
 a=q+v,&b=0,&c=u+p,&
 d=0,&e=q-v,&f=2p.
 \end{array}$$
 We then have $\rho_{22}=(p^2+q^2-u^2-v^2)dx^2\otimes dx^2$ so we may identify
 \begin{eqnarray*}
  &&\mathcal{A}_{+,0}^1=\{\Gamma(p,q,u,v):p^2+q^2>u^2+v^2\},\\
  &&\mathcal{A}_{-,0}^1=\{\Gamma(p,q,u,v):p^2+q^2<u^2+v^2\}\,.
 \end{eqnarray*}
 We examine $\mathcal{A}_{-,0}^1$ as the analysis of 
 $\mathcal{A}_{+,0}^1$ is the same after interchanging the roles of $(p,q)$ and $(u,v)$.
 Let $\mathcal{D}^2:=\{(U,V)\in\mathbb{R}^2:U^2+V^2<1\}$ be the open
 disk in $\mathbb{R}^2$.  Let $-\MM$ be the Type~$\mathcal{A}$ model $\mathcal{M}(-a,-b,-c,-d,-e,-f)$. 
 We construct a diffeomorphism
 $\Phi$ from $S^1\times\mathbb{R}^+\times\mathcal{D}^2$ to $\mathcal{A}_{-,0}^1$ by setting
 $u=r\cos\theta$, $v=r\sin\theta$, $p=rU$, $q=rV$. For $r>0$, $\theta\in S^1$, and $U^2+V^2<1$ we have
 $$
 \mathcal{M}=\mathcal{M}(r (\sin (\theta )+V),0,r (\cos (\theta )+U),0,r (V-\sin (\theta )),2 rU)\,.
 $$
 It is clear that $-\MM(\theta,r,U,V)=\MM(\theta+\pi,r,-U,-V)$.
 
 Let $\tilde\MM$ be an arbitrary Type~$\mathcal{A}$ model with $\operatorname{Rank}\{\rho_{\tilde\MM}\}=1$ and 
 $\rho_{\tilde\MM}$ negative semi-definite. We may express
 $$\rho_{\tilde\MM}=\lambda(\cos(\phi)dx^2-\sin(\phi)dx^1)\otimes(\cos(\phi)dx^2-\sin(\phi)dx^1)
 $$
for $\lambda<0$. Here $\phi$ is only defined modulo $\pi$ instead of the usual $2\pi$.
Let
 $$
 T_\phi(x^1,x^2)=(\cos(\phi)x^1+\sin(\phi)x^2,-\sin(\phi)x^1+\cos(\phi)x^2)\,.
 $$
 be the associated rotation so that
 $T_\phi^*(dx^2)=-\sin(\phi)dx^1+\cos(\phi) dx^2$ and thus $(T_\phi)_*\tilde{\mathcal{M}}$
 belongs to $\mathcal{A}_{-,0}^1$. We then have
 $$
 \mathcal{A}_-^1=\{\mathbb{R}/(2\pi\mathbb{Z})\times\mathcal{A}_{-,0}^{1}\}/(\phi,\MM)\sim(\phi+\pi,-\MM)
 $$
 where the gluing reflects the fact that when $\phi=\pi$ we have replaced $(x^1,x^2)$ by $(-x^1,-x^2)$ and
 thus changed the sign of the Christoffel symbols. Using our previous parametrization of $\mathcal{A}_{-,0}^{1}$, this yields
 $$
 \mathcal{A}_-^1=(\mathbb{R}^2/(2\pi\mathbb{Z})^2)\times\mathbb{R}^+\times\mathcal{D}^2/
 \{(\phi,\theta,r,U,V)\sim(\phi+\pi,\theta+\pi,r,-U,-V)\}\,.
 $$
 After setting $\tilde\theta=\theta+\phi$, we can rewrite this equivalence relation in the form
 $$
 (\phi,\tilde\theta,r,U,V)\sim(\phi+\pi,\tilde\theta,r,-U,-V)\,.
 $$
 The variable $\tilde\theta$ now no longer plays a role in the gluing. After replacing $\mathbb{R}^+$ by $\mathbb{R}$ and $\mathcal{D}^2$ by $\mathbb{R}^2$,
 we see $\mathcal{A}_-^1$ is diffeomorphic to $S^1\times S^1\times\mathbb{R}^3$ modulo the relation
 $$
 (\phi,\tilde\theta,x_1,x_2,x_3)\sim(\phi+\pi,\tilde\theta,x_1,-x_2,-x_3)\,.
 $$
 These gluing relations define the total space of
 the bundle $\pone\oplus\mathbb{L}\oplus\mathbb{L}$ over $(S^1,\phi)$. Since $\mathbb{L}\oplus\mathbb{L}$ is 
 diffeomorphic to the trivial 2-plane bundle
 $\pone\oplus\pone$, we obtain finally that $\mathcal{A}_{-}^1$ is diffeomorphic to $S^1\times S^1\times\mathbb{R}^3$.
\end{proof}
 
We adopt the notation of { Equation~\eqref{eq:models1}} to describe the orbits of the
models $\mathcal{M}_i^1(\cdot)$ in the following lemma. 
\begin{lemma}\label{L4.2}
 \ \begin{enumerate}
  \item $\mathcal{I}(\mathcal{M}_1^1)=\{\operatorname{id}\}$.
  \item $\mathcal{I}(\mathcal{M}_2^1(c_1))=\{\operatorname{id}\}$ if $c_1\ne-\frac12$.
  \item $\mathcal{I}(\mathcal{M}_2^1(-\frac{1}{2}))=\{\operatorname{id},T\}$, where $T(x^1,x^2)=(x^1+x^2,-x^2)$.
  \item $\mathcal{I}(\mathcal{M}_3^1(c_1))=\{T:T(x^1,x^2)=(v^{-1}x^1,x^2) \text{ for } v\in\mathbb{R}\backslash \{0\}\}$.
  \item $\mathcal{I}(\mathcal{M}_4^1(c))=\{T:T(x^1,x^2)=(x^1-wx^2,x^2)\text{ for }w\in\mathbb{R}\}$,  if $c\ne0$, .
  \item $\mathcal{I}(\mathcal{M}_4^1(0))=\{T:T(x^1,x^2)=(v^{-1}(x^1-wx^2),x^2)\text{ for }w\in\mathbb{R}, v\in\mathbb{R}\backslash \{0\}\}$.
  \item $\mathcal{I}(\mathcal{M}_5^1(c))=\{\operatorname{id}\}$, if $c\ne0$.
  \item $\mathcal{I}(\mathcal{M}_5^1(0))=\{\operatorname{id},T\}$ where $T(x^1,x^2)=(x^1,-x^2)$.
\end{enumerate}\end{lemma}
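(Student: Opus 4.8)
The plan is to use the rigidity already present in Equation~\eqref{eq:models1}. Every model there has $b=d=0$, so by Lemma~\ref{L3.1} its Ricci tensor $\rho$ is a non-zero multiple of $dx^2\otimes dx^2$. Since the Ricci tensor is a natural object, any $T\in\mathcal{I}(\mathcal{M})$ satisfies $T^*\rho=\rho$. Writing $T(x^1,x^2)=(a^1_1x^1+a^1_2x^2,\,a^2_1x^1+a^2_2x^2)$, one has $T^*(dx^2)=a^2_1\,dx^1+a^2_2\,dx^2$, so the identity $\lambda(a^2_1\,dx^1+a^2_2\,dx^2)\otimes(a^2_1\,dx^1+a^2_2\,dx^2)=\lambda\,dx^2\otimes dx^2$ with $\lambda\ne0$ forces $a^2_1=0$ and $a^2_2=\varepsilon$ with $\varepsilon\in\{1,-1\}$. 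Thus every element of the isotropy group is of the form
$$
T(x^1,x^2)=(\alpha x^1+\beta x^2,\,\varepsilon x^2),\qquad \alpha\in\mathbb{R}\setminus\{0\},\ \beta\in\mathbb{R},\ \varepsilon=\pm1,
$$
which reduces the problem from $\operatorname{GL}(2,\mathbb{R})$ to this $2$-parameter, two-component family.

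Next I would impose the remaining content of $T^*\mathcal{M}=\mathcal{M}$, namely that the whole array of Christoffel symbols be preserved; since $T$ is linear this is tensorial (there is no inhomogeneous term) and yields a system of polynomial identities in $\alpha,\beta,\varepsilon$. Using $b=d=0$, a direct computation shows that preservation of the $\Gamma_{ij}{}^2$ is automatic except for $\Gamma_{22}{}^2$, and that $T^*\mathcal{M}=\mathcal{M}$ collapses to
\begin{gather*}
(\alpha^{-1}-1)\,\Gamma_{11}{}^1=0,\qquad (\varepsilon-1)\,\Gamma_{22}{}^2=0,\\
(\varepsilon-1)\,\Gamma_{12}{}^1=\alpha^{-1}\varepsilon\beta\,\Gamma_{11}{}^1,\\
\alpha^{-1}\beta^2\,\Gamma_{11}{}^1-2\beta\,\Gamma_{12}{}^1+\alpha\,\Gamma_{22}{}^1+\beta\,\Gamma_{22}{}^2=\Gamma_{22}{}^1.
\end{gather*}
In particular $\Gamma_{11}{}^1\ne0$ forces $\alpha=1$; $\Gamma_{22}{}^2\ne0$ forces $\varepsilon=1$; and if $\Gamma_{11}{}^1=0$ while $\Gamma_{12}{}^1\ne0$ the third equation still forces $\varepsilon=1$. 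Hence the branch $\varepsilon=-1$ can survive only for $\mathcal{M}_2^1(-\tfrac12)$ and $\mathcal{M}_5^1(0)$.

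It then remains to substitute the Christoffel data of each model in Equation~\eqref{eq:models1} into this reduced system and read off the solution set. For $\mathcal{M}_1^1$, for $\mathcal{M}_2^1(c_1)$ with $c_1\ne-\tfrac12$, and for $\mathcal{M}_5^1(c)$ with $c\ne0$ one has $\Gamma_{11}{}^1\ne0\ne\Gamma_{22}{}^2$, which forces $\alpha=\varepsilon=1$ and then $\beta=0$, so the isotropy group is trivial. For $\mathcal{M}_4^1(c)$ with $c\ne0$ one has $\Gamma_{11}{}^1=0$ and $\Gamma_{22}{}^2\ne0$, so $\varepsilon=1$ and the fourth equation forces $\alpha=1$ while $\beta$ remains free. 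For $\mathcal{M}_3^1(c_1)$ one has $\Gamma_{11}{}^1=\Gamma_{22}{}^1=0$ and $\Gamma_{12}{}^1\ne0$, so $\varepsilon=1$ (uniformly in $c_1$, in particular also at $c_1=-\tfrac12$ where $\Gamma_{22}{}^2=0$), the fourth equation forces $\beta=0$, and $\alpha$ remains free. For $\mathcal{M}_4^1(0)$ the fourth equation becomes vacuous, so both $\alpha$ and $\beta$ remain free. Finally, for $\mathcal{M}_2^1(-\tfrac12)$ and $\mathcal{M}_5^1(0)$ the branch $\varepsilon=-1$ yields exactly the involutions $T(x^1,x^2)=(x^1+x^2,-x^2)$ and $T(x^1,x^2)=(x^1,-x^2)$, while the branch $\varepsilon=1$ again gives only $\operatorname{id}$. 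A short verification that each listed $T$ really does preserve the connection completes the proof. I expect the only delicate point to be the bookkeeping: tracking the $\varepsilon=-1$ branch through every case, and confirming both that no unexpected continuous family appears and that the $\mathcal{M}_3^1$ formula is genuinely independent of $c_1$. The uniform reduction above is what turns these eight verifications into routine computations rather than a real obstacle.
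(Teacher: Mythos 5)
Your proposal is correct and follows essentially the same route as the paper: use preservation of the Ricci tensor (a nonzero multiple of $dx^2\otimes dx^2$ by Lemma~\ref{L3.1}) to reduce the isotropy to maps $T(x^1,x^2)=(\alpha x^1+\beta x^2,\varepsilon x^2)$ with $\varepsilon=\pm1$, then impose invariance of the Christoffel symbols and check the eight cases. Your reduced four-equation system is exactly the paper's transformation formulas specialized to $b=d=0$ (under $\alpha=v^{-1}$, $\beta=-v^{-1}w$), and your case-by-case conclusions agree with Lemma~\ref{L4.2}.
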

\begin{proof} Suppose $T\in\mathcal{I}(\mathcal{M}_i^1(\cdot))$. The
 Ricci tensor of $\mathcal{M}_i^1(\cdot)$ is a non-zero multiple of $dx^2\otimes dx^2$. Since $T$ must preserve the
 Ricci tensor, $T(dx^2)=\pm dx^2$. This implies
  $(y^1,y^2)=T(x^1,x^2)=(v^{-1}(x^1-wx^2),\varepsilon x^2)$ for $\varepsilon=\pm1$. Then
  \par $dy^1=v^{-1}(dx^1-wdx^2)$, \quad $dy^2=\varepsilon dx^2$,\quad 
  $\partial_{y^1}=v\partial_{x^1}$,\quad$\partial_{y^2}=\varepsilon(w \partial_{x^1}+\partial_{x^2})$,
  \par${}^y\Gamma_{11}{}^1 := v ({}^x\Gamma_{11}{}^1 - w\ {}^x\Gamma_{11}{}^2)$.
  \par${}^y\Gamma_{11}{}^2 := v^2\varepsilon {}^x\Gamma_{11}{}^2$,
  \par${}^y\Gamma_{12}{}^1 := \varepsilon ({}^x\Gamma_{12}{}^1 +
  w\ ( {}^x\Gamma_{11}{}^1 - {}^x\Gamma_{12}{}^2 - w\ {}^x\Gamma_{11}{}^2))$,
  \par${}^y\Gamma_{12}{}^2 := v ({}^x\Gamma_{12}{}^2 + w\  {}^x\Gamma_{11}{}^2)$,
  \par${}^y\Gamma_{22}{}^1 := \frac{1}v({}^x\Gamma_{22}{}^1 + w (2\ {}^x\Gamma_{12}{}^1
 - {}^x\Gamma_{22}{}^2) + w^2 ({}^x\Gamma_{11}{}^1 - 2\ {}^x\Gamma_{12}{}^2) - w^3\ {}^x\Gamma_{11}{}^2)$,
  \par${}^y\Gamma_{22}{}^2 := \varepsilon({}^x\Gamma_{22}{}^2 + 2 w\ {}^x\Gamma_{12}{}^2 + w^2\ {}^x\Gamma_{11}{}^2)$.
 
 \medbreak\noindent{\bf Case 1.}
 $\mathcal{M}_1^1=\mathcal{M}(-1,0,1,0,0,2)$ and $T^*\mathcal{M}_1^1=\mathcal{M}(-v,0,\varepsilon(1-w),0,-\frac{w^2}v,2\varepsilon)$.
 Examining $\Gamma_{11}{}^1$ and $\Gamma_{22}{}^2$ yields $\varepsilon=1$ and $v=1$. Examining $\Gamma_{22}{}^1$ yields $w=0$.
 \medbreak\noindent{\bf Case 2.} We have $c\notin\{0,-1\}$,
 $\mathcal{M}_2^1(c)=\mathcal{M}(-1,0,c,0,0,1+2c)$, and
$$\textstyle
{ T^*\mathcal{M}_2^1(c)=
\mathcal{M}(-v,0,\varepsilon(c-w),0,-\frac{1}v(w+w^2),(1+2c)\varepsilon)\,.}
$$
 Examining $\Gamma_{11}{}^1$
 yields $v=1$. Suppose $c\ne-\frac12$. Examining $\Gamma_{22}{}^2$
 yields $\varepsilon=1$. Since $\varepsilon=1$, examining $\Gamma_{12}{}^1$  yields $w=0$. 
 Suppose $c=-\frac12$. Examining {$\Gamma_{12}{}^1$ and $\Gamma_{22}{}^1$} yields
 $(\varepsilon,w)=(1,0)$ or $(\varepsilon,w)=(-1,-1)$.
 \smallbreak\noindent{\bf Case 3.} We have $c\notin\{0,-1\}$,
 $\mathcal{M}_3^1(c)=\mathcal{M}(0,0,c,0,0,1+2c)$, and
 $$\textstyle
 { T^*\mathcal{M}_3^1(c)=\mathcal{M}(0,0,c\varepsilon,0,-\frac{w}v,(1+2c)\varepsilon)}\,.
 $$
 Examining $\Gamma_{12}{}^1$ yields $\varepsilon=1$.
 Examining $\Gamma_{22}{}^1$ yields $w=0$. There  is then no condition on $v$.
 \smallbreak\noindent{\bf Case 4.}
 $\mathcal{M}_4^1(c)=\mathcal{M}(0,0,1,0,c,2)$ and $T^*\mathcal{M}_4^1(c)=\mathcal{M}(0,0,\varepsilon,0,\frac{c}v,2\varepsilon)$.
 Examining $\Gamma_{22}{}^2$ yields $\varepsilon=1$. There is no condition on $w$. If $c\ne0$, examining $\Gamma_{22}{}^1$ yields $v=1$;
 if $c=0$, there is no condition on $v$.
 \smallbreak\noindent{\bf Case 5.}
 $\mathcal{M}_5^1(c)=\mathcal{M}(1,0,0,0,1+c^2,2c)$ and
$$
\textstyle{ T^*\mathcal{M}_5^1(c)=\mathcal{M}(v,0,w\varepsilon,0,\frac{1}v(1+(c-w)^2),
2c\varepsilon)\,.}
$$
 Examining $\Gamma_{11}{}^1$ shows $v=1$.
 Examining $\Gamma_{12}{}^1$ shows $w=0$.
 If $c\ne0$, examining $\Gamma_{22}{}^2$ shows $\varepsilon=1$. If $c=0$, we obtain $\varepsilon=\pm1$.
\end{proof}

The general linear group $\operatorname{GL}(2,\mathbb{R})$ acts on the space $\mathbb{R}^6$ of all Type~$\mathcal{A}$ geometries 
via change of coordinates. Let $\operatorname{GL}_+(2,\mathbb{R})$
be the subgroup of matrices with positive determinant. 
If $\mathcal{M}$ is a Type~$\mathcal{A}$ model with $\operatorname{Rank}\{\rho\}(\mathcal{M})=2$, then
the associated space of affine Killing vector fields is 2-dimensional and $\mathcal{M}$ does not also admit
a Type~$\mathcal{B}$ structure \cite{BGG18}. But there are Type~$\mathcal{A}$ models with 
$\operatorname{Rank}\{\rho\}=1$ which also admit Type~$\mathcal{B}$ structures. 
Let $\mathcal{O}^1_{\pm}\subset\mathcal{A}^1_{\pm}$ be the set of
Type~$\mathcal{A}$
models with $\operatorname{Rank}\{\rho\}=1$ and which do not admit Type~$\mathcal{B}$ structures.

\begin{theorem}\label{T1.3}
 \ \begin{enumerate}
  \item $\mathcal{O}^1_{-}$ is empty; every element of $\mathcal{A}^1_{-}$ also admits Type~$\mathcal{B}$ structure.
  \item
  $\operatorname{GL}_+(2,\mathbb{R})$ acts without fixed points on
  $\mathcal{O}^1_{+}$. The action admits a section $s:\mathbb{R}\rightarrow\mathcal{O}^1_{+}$ so
  $\mathcal{O}^1_{+}=\operatorname{GL}_+(2,\mathbb{R})\times\mathbb{R}$ is a principal fiber bundle over $\mathbb{R}$.
\end{enumerate}
\end{theorem}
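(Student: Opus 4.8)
\textbf{Proof plan for Theorem~\ref{T1.3}.}
The plan is to treat the Ricci tensor as the primary invariant, use it to locate $\mathcal{A}^1_\pm$ among the models of Equation~\eqref{eq:models1}, and combine this with the isotropy computations of Lemma~\ref{L4.2}. First I record that by Equation~\eqref{E1.b} the Ricci tensor of each model in Equation~\eqref{eq:models1} is the multiple of $dx^2\otimes dx^2$ with $\rho_{22}=1$ for $\mathcal{M}_1^1$ and $\mathcal{M}_4^1(c)$, $\rho_{22}=1+c^2$ for $\mathcal{M}_5^1(c)$, and $\rho_{22}=c_1(1+c_1)$ for $\mathcal{M}_2^1(c_1)$ and $\mathcal{M}_3^1(c_1)$. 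Consequently $\mathcal{A}^1_-$ is exactly the union of the $\operatorname{GL}(2,\mathbb{R})$-orbits of $\mathcal{M}_2^1(c_1)$ and $\mathcal{M}_3^1(c_1)$ for $c_1\in(-1,0)$, while all other models, at all admissible parameters, lie in $\mathcal{A}^1_+$. For assertion (1) it therefore suffices, since admitting a Type~$\mathcal{B}$ structure is preserved by linear equivalence, to exhibit a Type~$\mathcal{B}$ structure on the two one-parameter families $\mathcal{M}_2^1(c_1)$ and $\mathcal{M}_3^1(c_1)$, $c_1\in(-1,0)$. I would do this by writing down the relevant change of coordinates explicitly, the point being that both families carry affine Killing fields beyond the two translations (for $\mathcal{M}_3^1$ the dilation $x^1\partial_{x^1}$, and the analogous extra field for $\mathcal{M}_2^1(c_1)$ on this parameter interval), so that the Lie algebra $\mathfrak{K}$ of affine Killing vector fields has dimension $\ge 3$ and the classification in \cite{BGG18} of surfaces carrying both structures applies. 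This yields $\mathcal{O}^1_-=\emptyset$.

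For the freeness in assertion (2): if $\mathcal{M}\in\mathcal{O}^1_+$ then $\mathcal{M}$ is linearly equivalent to one of $\mathcal{M}_1^1$, $\mathcal{M}_2^1(c_1)$ with $c_1\notin[-1,0]$, or $\mathcal{M}_5^1(c)$, because $\mathcal{M}_3^1$ and $\mathcal{M}_4^1$ admit Type~$\mathcal{B}$ structures and are excluded. By Lemma~\ref{L4.2}, each of these has isotropy either trivial or a two-element group whose nontrivial element $\big($namely $(x^1+x^2,-x^2)$ for $\mathcal{M}_2^1(-\tfrac12)$ and $(x^1,-x^2)$ for $\mathcal{M}_5^1(0)\big)$ has determinant $-1$. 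Hence $\mathcal{I}(\mathcal{M})\cap\operatorname{GL}_+(2,\mathbb{R})=\{\operatorname{id}\}$ for every $\mathcal{M}\in\mathcal{O}^1_+$, so $\operatorname{GL}_+(2,\mathbb{R})$ acts without fixed points and the orbit space is a $1$-manifold.

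To identify the orbit space with $\mathbb{R}$ and build the section, I would use the $\operatorname{GL}_+(2,\mathbb{R})$-equivariant map $\mathcal{M}\mapsto\rho_{\mathcal{M}}$. Since $\operatorname{GL}_+(2,\mathbb{R})$ acts transitively on the set of rank-one positive semi-definite symmetric $2$-tensors $\{\lambda\,\omega\otimes\omega:\lambda>0\}$, the orbit space of $\mathcal{O}^1_+$ equals $F/\mathcal{H}$, where $F:=\{\mathcal{M}\in\mathcal{O}^1_+:\rho_{\mathcal{M}}=dx^2\otimes dx^2\}$ and $\mathcal{H}=\{(x^1,x^2)\mapsto(v^{-1}(x^1-wx^2),\varepsilon x^2):\varepsilon=\pm1,\ w\in\mathbb{R},\ v\varepsilon>0\}$ is the stabilizer of $dx^2\otimes dx^2$. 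By Lemma~\ref{L3.1} and the substitution $b=d=0$, $a=q+v$, $c=u+p$, $e=q-v$, $f=2p$ from the proof of Theorem~\ref{T1.2}, $F$ is the quadric $\{p^2+q^2-u^2-v^2=1\}$ with the Type~$\mathcal{B}$ locus deleted, and the explicit $\mathcal{H}$-action on this quadric (free on $F$ by the preceding paragraph) shows that removing the Type~$\mathcal{B}$ locus turns the one-dimensional quotient $\{p^2+q^2-u^2-v^2=1\}/\mathcal{H}$ into a single copy of $\mathbb{R}$; equivalently, one checks directly that the $\operatorname{GL}_+(2,\mathbb{R})$-orbits of $\mathcal{M}_1^1$, $\mathcal{M}_2^1(c_1)$ $(c_1\notin[-1,0])$ and $\mathcal{M}_5^1(c)$ glue into one connected one-parameter family. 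Lifting a parametrization of this interval gives a smooth section $s:\mathbb{R}\to\mathcal{O}^1_+$ meeting each orbit exactly once and transversally, whence $(g,t)\mapsto g\cdot s(t)$ is a smooth bijection $\operatorname{GL}_+(2,\mathbb{R})\times\mathbb{R}\to\mathcal{O}^1_+$ that is an immersion (freeness gives independence of the orbit directions, transversality of $s$ the remaining one), and a bijective immersion between equidimensional manifolds is a diffeomorphism. This exhibits $\mathcal{O}^1_+$ as the (necessarily trivial) principal $\operatorname{GL}_+(2,\mathbb{R})$-bundle over $\mathbb{R}$.

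The routine parts are the Ricci and isotropy computations. The main obstacle is the fiber analysis in the last paragraph: pinning down precisely which points of the quadric $F$ carry a Type~$\mathcal{B}$ structure (equivalently, where $\dim\mathfrak{K}$ jumps to $\ge 3$) and verifying that the complement modulo the two-component group $\mathcal{H}$ is one line rather than two lines or a circle; establishing the companion fact for (1), that the negative semi-definite families $\mathcal{M}_2^1(c_1)$, $c_1\in(-1,0)$, really do admit Type~$\mathcal{B}$ structures, is the other point requiring genuine verification.
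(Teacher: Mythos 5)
There is a genuine gap, and it is concentrated in your treatment of Assertion~(2). You assert that if $\mathcal{M}\in\mathcal{O}^1_{+}$ then $\mathcal{M}$ is linearly equivalent to $\mathcal{M}_1^1$, to $\mathcal{M}_2^1(c_1)$ with $c_1\notin[-1,0]$, or to $\mathcal{M}_5^1(c)$, ``because $\mathcal{M}_3^1$ and $\mathcal{M}_4^1$ admit Type~$\mathcal{B}$ structures and are excluded.'' The relevant fact from \cite{BGG18}, which the paper invokes, is stronger: \emph{all} of the models $\mathcal{M}_i^1(\cdot)$ for $1\le i\le 4$ admit Type~$\mathcal{B}$ structures, and only the family $\mathcal{M}_5^1(c)$ does not. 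You appear to have inferred the Type~$\mathcal{B}$ property from the size of the \emph{linear} isotropy groups in Lemma~\ref{L4.2} (positive-dimensional exactly for $i=3,4$), but that inference is invalid: the extra affine Killing fields that force a second homogeneous structure need not be linear, and indeed $\mathcal{M}_1^1$ and $\mathcal{M}_2^1(c_1)$ have $4$-dimensional affine Killing algebras despite trivial (or two-element) linear isotropy. Your own argument for Assertion~(1) already relies on $\mathcal{M}_2^1(c_1)$ admitting a Type~$\mathcal{B}$ structure for $c_1\in(-1,0)$ via a non-linear Killing field, which should have signalled that the same mechanism could apply for $c_1\notin[-1,0]$.

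The consequence is that your $\mathcal{O}^1_{+}$ is strictly too large, and the entire final paragraph — the quotient of the quadric $F$ by $\mathcal{H}$ and the proposed gluing of the three families $\mathcal{M}_1^1$, $\mathcal{M}_2^1(c_1)$, $\mathcal{M}_5^1(c)$ into a single line — is built on a false premise; the ``main obstacle'' you flag is an artifact of the error rather than a step that can be completed. Once the correct input from \cite{BGG18} is used, the argument collapses to something much simpler: $\mathcal{O}^1_{+}=\operatorname{GL}(2,\mathbb{R})\cdot\mathfrak{M}_5^1$ where $\mathfrak{M}_5^1=\cup_c\mathcal{M}_5^1(c)$ is a smooth curve; since $T(x^1,x^2)=(x^1,-x^2)$ has determinant $-1$ and carries $\mathcal{M}_5^1(c)$ to $\mathcal{M}_5^1(-c)$, one even has $\mathcal{O}^1_{+}=\operatorname{GL}_+(2,\mathbb{R})\cdot\mathfrak{M}_5^1$, the section is simply $s(c)=\mathcal{M}_5^1(c)$, and freeness follows from Lemma~\ref{L4.2}(7),(8) exactly as you argue (the only nontrivial stabilizing element has determinant $-1$). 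Your computation of the Ricci tensors, your reduction of Assertion~(1) to the families $\mathcal{M}_2^1(c_1)$, $\mathcal{M}_3^1(c_1)$ with $c_1\in(-1,0)$, and your determinant argument for freeness are all sound; the misidentification of the Type~$\mathcal{B}$ locus is the one substantive flaw, but it invalidates the construction of the section as you describe it.
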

\begin{proof}
Resuts of \cite{BGG18} show that the models $\mathcal{M}_i^1(\cdot)$ for $1\le i\le 4$ 
also admit Type~$\mathcal{B}$ structures while
the models $\mathcal{M}_5^1(c)$ do not. 
 The Ricci tensor associated to $\mathcal{M}_i^1(\cdot)$ is given by:
   $${
   \begin{array}{lll}
    \rho^{\MM_1^1}= dx^2\otimes dx^2,& \,\,
    \rho^{\MM_2^1}=c_1(1+c_1) dx^2\otimes dx^2,\\[0.05in]
    \rho^{\MM_3^1}=c_1(1+c_1) dx^2\otimes dx^2,& \,\,
\rho^{\MM_4^1}= dx^2\otimes dx^2,\\[0.05in]
   \rho^{\MM_5^1}=(1+c^2) dx^2\otimes dx^2.
   \end{array}}
   $$
If $\rho\le0$, then it follows that
$i=2$ or $i=3$ and $c\in(-1,0)$. Thus any element of $\mathcal{A}^1_{-}$ admits a Type~$\mathcal{B}$ structure which
proves Assertion~(1).

Let $\mathfrak{M}_5^1=\cup_c\mathcal{M}_5^1(c)$; this is a smooth curve in $\mathbb{R}^6$.
Type $\mathcal{A}$ models which are linearly equivalent to $\mathcal{M}_1^1$, $\mathcal{M}_2^1(c_1)$ for $c_1+c_1^2>0$,
$\mathcal{M}_3^1(c_1)$ for $c_1+c_1^2>0$, or $\mathcal{M}_4^1(c)$ all admit Type~$\mathcal{B}$ structures and have $\rho\ge0$.
Thus we may identify the structures $\mathcal{O}_{1,+}$ which do not admit Type~$\mathcal{B}$
structures with $\operatorname{GL}(2,\mathbb{R})\cdot\mathfrak{M}_5^1$. 
Let $T(x^1, x^2):=(x^1, -x^2)$. We have $T\mathcal{M}_5^1(c)=\mathcal{M}_5^1(-c)$.
Since $\det(T)=-1$, we conclude therefore that $\mathcal{O}^1_{+}=\operatorname{GL}_+(2,\mathbb{R})\cdot\mathfrak{M}_5^1$.
By Lemma~\ref{L4.2}, the action of
$\operatorname{GL}_+(2,\mathbb{R})$ on $\mathfrak{M}_5^1$ is fixed point free. Assertion~(2) follows.
\end{proof}

\section{The space of Type~$\mathcal{B}$ connections}\label{section-type-B}
Let 
$\mathcal{N}(a,b,c,d,e,f):=(\mathbb{R}^+\times\mathbb{R},\nabla)$ where the Christoffel symbols of $\nabla$ are given by
\eqref{E1.c}. The Ricci tensor needs not be symmetric in this setting:
\begin{equation}\label{E1.d}\begin{array}{l}
\rho=(x^1)^{-2}\left(
\begin{array}{cc}
(a-d+1) d+b (f-c) & c d-b e+f \\
c (d-1)-b e & -c^2+f c+(a-d-1) e \\
\end{array}
\right)
\end{array}\end{equation}

\subsection{The space of flat Type~$\mathcal{B}$ models}

\begin{proof}[The proof of Theorem~\ref{T1.5}]
Let $\mathcal{N}=\mathcal{N}(a,b,c,d,e,f)$. We clear denominators
in Equation~\eqref{E1.d} and set $\tilde\rho_{ij}=(x^1)^2\rho_{ij}$.
Adopt the notation of Equation \eqref{eq:paramet1}.
A direct computation shows the structures
$\mathcal{U}_i(\cdot)$ are flat. We distinguish cases to establish the converse. We use
Equation~(\ref{E1.d}) and set $\tilde\rho=0$. Since
$\tilde\rho_{12}-\tilde\rho_{21}=c+f$, $f=-c$.
\smallbreak\noindent{\bf Case 1.} Assume $e\ne0$. Set $c=rs$, $e=r$, and $f=-rs$ for $r\ne0$.
Then
$$
\tilde\rho_{22}=-r (1 - a + d + 2 r s^2 )\text{ and }
\tilde\rho_{21}= -r(b+s-ds)\,.
$$
We solve these equations to obtain $a = 1 + d + 2 r s^2 $ and $b=(-1 + d) s $. We have
$\tilde\rho_{11}=2(d+ rs^2)$.
Thus $d=-rs^2$ which gives the parametrization $\mathcal{U}_1$.
\smallbreak\noindent{\bf Case 2.} Suppose $e=0$. Set $a=u$, $b=v$, and $f=-c$ to obtain
$$
\tilde\rho=\left(
\begin{array}{cc}
 d(1+u-d)-2 c v & c (d-1) \\
 c (d-1) & -2  c^2
\end{array}
\right)\,.
$$
This yields $c=0$ and $d(1+u-d)=0$.
If we set $d=0$, we obtain the parametrization $\mathcal{U}_2$; if we set $d=1+u$, we obtain
the parametrization $\mathcal{U}_3$. This establishes the first assertion.

The parametrization $\mathcal{U}_2$ and $\mathcal{U}_3$ intersect when $u=-1$; 
the intersection is transversal along
the curve $\mathcal{N}( -1,v,0,0,0,0)$. We wish to extend the parametrization $\mathcal{U}_1$
to study the limiting behavior as $e\rightarrow0$. We distinguish cases.
\smallbreak\noindent{\bf Case A.} Suppose $\lim_{n\rightarrow\infty}\mathcal{U}_1(r_n,s_n)\in\operatorname{Range}\{\mathcal{U}_2\}$.
We have 
$$\begin{array}{lll}
\lim_{n\rightarrow\infty}1+r_ns_n^2=u,&\lim_{n\rightarrow\infty}-s_n(1+r_ns_n^2)=v,&\lim_{n\rightarrow\infty}-r_ns_n=0,\\
\lim_{n\rightarrow\infty}-r_ns_n^2=0,&\lim_{n\rightarrow\infty}r_n=0,&\lim_{n\rightarrow\infty}-r_ns_n=0.
\end{array}$$
These equations imply $u=1$, $\lim_{n\rightarrow\infty}r_n=0$, $\lim_{n\rightarrow\infty}s_n=-v$. Thus we may simply set $r=0$
to obtain a transversal intersection along the curve $\mathcal{N}(1,v,0,0,0,0)$.

\smallbreak\noindent{\bf Case B.} Suppose $\lim_{n\rightarrow\infty}\mathcal{U}_1(r_n,s_n)\in\operatorname{Range}\{\mathcal{U}_3\}$.
We have
$$\begin{array}{lll}
\lim_{n\rightarrow\infty}1+r_ns_n^2=u,&\lim_{n\rightarrow\infty}-s_n(1+r_ns_n^2)=v,&\lim_{n\rightarrow\infty}-r_ns_n=0,\\
\lim_{n\rightarrow\infty}-r_ns_n^2=1+u,&\lim_{n\rightarrow\infty}r_n=0,&\lim_{n\rightarrow\infty}-r_ns_n=0.
\end{array}$$
These equations imply $u=0$, $\lim_{n\rightarrow\infty} r_n=0$, and $\lim_{n\rightarrow\infty}r_ns_n^2=-1$.
We change variables setting $r=-t^2$ and $s=\frac1t+w$ to express
$$\begin{array}{llll}
\mathcal{U}_1 (-t^2,\frac1t+w)=\mathcal{N} (&- t w(2+t w), & w(2 + 3 t w + t^2 w^2), &-t(1+ t w), \\[0.05in]
&(1 + t w)^2 , & -t^2,&  t(1 + t w))\,.
\end{array}$$
We may now safely set $t=0$ to obtain the intersection with $\operatorname{Range}\{\mathcal{U}_3\}$ along the curve
$\mathcal{N}(0,2w,0,1,0,0)$.
\end{proof}

\subsection{Type~$\mathcal{B}$ models with alternating Ricci tensor}

It was shown in \cite{BGG18} that any Type~$\mathcal{B}$ model with alternating Ricci tensor is linearly equivalent to one of the following models:
 $$
 \begin{array}{ll}
 \mathcal{N}_1(c):=\mathcal{N}(0,c,1,0,0,1), & \text{for}\,\, c\in\mathbb{R},
 \\
 \noalign{\medskip}
 \mathcal{N}_2(c,\pm):=\mathcal{N}(1\mp c^2),c,0,\mp c^2,\pm 1,\pm 2c), &\text{for}\,\, c>0.
 \end{array}
 $$

\begin{proof}[The proof of Theorem~\ref{T1.7}]
Adopt the notation of Equation \eqref{eq:paramet2}. It is clear that $\mathcal{V}_1$ defines a smooth 3-dimensional submanifold of $\mathbb{R}^6$. To see similarly
that $\mathcal{V}_2$ is smooth, we note that we can recover
$u=\frac12( c+f)$ and $v=e$.  If $v\ne0$, then $w=\frac1v(f-u)$
while if $v=0$, $w= \frac1{2u}(1-a)$. Thus $\mathcal{V}_2$ is 1-1; it is 
not difficult to verify the Jacobian determinant
is non-zero. This shows that $\mathcal{V}_2$ also defines a smooth 3-dimensional submanifold
of $\mathbb{R}^6$. We set $v=0$ and $u=r$ to see that $\mathcal{V}_1$ and $\mathcal{V}_2$
intersect along the surface $v=0$, $u=r$, $s=1-2 uw$ and $t=w(1-uw)$. A direct computation shows
the associated Ricci tensors are non-trivial and alternating:
$$
 \tilde\rho_{\mathcal{V}_1}=r\left(\begin{array}{cc}0&1\\-1&0\end{array}\right)\text{ and }
 \tilde\rho_{\mathcal{V}_2}=u\left(\begin{array}{cc}0&1\\-1&0\end{array}\right)\,.
$$
Let $\mathcal{N}$ be a Type~$\mathcal{B}$ model with $\rho_s=0$ and 
$\tilde\rho_{a,12}= \frac{c+f}2\ne0$. We distinguish cases.
\smallbreak\noindent{\bf Case 1.} Suppose $e=0$. Set $c=2r-f$ for $r\ne0$.
Setting the $\rho_s=0$ yields
$$\begin{array}{ll}
\rho_{s,11}:\ 0= d(1+a-d) + 2 b (f - r),&
\rho_{s,12}:\ 0=(1-d ) f + r( 2 d-1),\\
\rho_{s,22}:\ 0=-2 (f^2 - 3 f r + 2 r^2).
\end{array}$$
We solve the equation $-2 (f^2 - 3 f r + 2 r^2)=0$ to obtain 
$f=r$ or $f=2r$. Setting $f=2r$ yields $\rho_{s12}$: $0=r$ which is false.
Thus $f=r$. We obtain $\rho_{s,12}=2dr$ so $d=0$. Set
$a=s$ and $b=t$ to obtain the parametrization $\mathcal{V}_1$.
\smallbreak\noindent{\bf Case 2.} Set $c=2u-f$ and $e=v$ for $u\ne0$ and $v\ne0$. We obtain
\begin{eqnarray*}
&&\rho_{s,11}:\ 0= d(1+a-d) + 2 b( f - u), \\
&&\rho_{s,12}:\ 0=  (1 - d) f - u + 2 d u - b v,\\
&&\rho_{s,22}:\ 0=  -2 f^2 + 6 f u - 4 u^2 - (1 - a + d) v.
\end{eqnarray*}
Setting $\rho_{s,12}=0$ and $\rho_{s,22}=0$ yields
$\textstyle a=\frac1v(2 f^2 - 6 f u + 4 u^2 + v + d v)$ and
$\textstyle b =\frac1v(f - d f - u + 2 d u)$.
We obtain
$\rho_{s,11}=\frac1v(2 (f^2 - 2 f u + u^2 + d v))$. 
This implies that $d=\textstyle-\frac{(f-u)^2}v$. 
Setting $f=vw+u$ yields the parametrization $\mathcal{V}_2$.
This parametrization can be extended safely to $v=0$; we require $u\ne0$ to ensure $\rho_a\ne0$.
\end{proof}


\begin{thebibliography}{99}

\bibitem{AMK08} 
T. Arias-Marco and O. Kowalski, 
Classification of locally homogeneous linear connections with arbitrary torsion on 2-dimensional manifolds, 
{\it Monatsh. Math. \bf 153} (2008), 1--18.

\bibitem{BVGRG2019}
M. Brozos-V\'{a}zquez, E. Garc\'{i}a-R\'{i}o, and P. Gilkey,
On distinguished local coordinates for locally homogeneous affine surfaces,
 arXiv:1901.03523 [math.DG]

\bibitem{BGG18}
M. Brozos-V\'{a}zquez, E. Garc\'{i}a-R\'{i}o, and P. Gilkey,
Homogeneous affine surfaces: affine Killing vector fields and gradient Ricci solitons, 
 \emph{J. Math. Soc. Japan} {\bf 70} (2018), 25--70.
  
\bibitem{BGG16}
M. Brozos-V\'{a}zquez, E. Garc\'{i}a-R\'{i}o, and P. Gilkey,
Homogeneous affine surfaces: Moduli spaces, 
\emph{J. Math. Anal. Appl.} {\bf 444} (2016), 1155--1184.
  
\bibitem{Du10} 
Z. D\v usek,
The existence of homogeneous geodesics in homogeneous pseudo-Riemannian and affine manifolds, 
{\it J. Geom. Phys. \bf60} (2010), 687--689.

\bibitem{GV18} 
P. Gilkey and X. Valle-Regueiro,
Applications of PDEs to the study of affine surface geometry, 
\emph{Mat. Vesnik} \textbf{71} (2019), 45--62.

\bibitem{GV18a} 
P. Gilkey, J.H. Park, and X. Valle-Regueiro,
Affine Killing complete and geodesically complete homogeneous affine surfaces,  
\emph{J. Math. Anal. Appl.}  \textbf{474} (2019),  179--193.

\bibitem{AG14} 
A. Guillot and A. S\'anchez Godinez, 
A classification of locally homogeneous affine connections on compact surfaces,
{\it Ann. Global Anal. Geom. \bf 46} (2014), 335--339.

\bibitem{kv03}
O. Kowalski and Z. Vl\'a\v{s}ek,
On the local moduli space of locally homogeneous affine connections in plane domains.
{\it Comment. Math. Univ. Carolin. \bf 44} (2003), 229--234. 

\bibitem{KS14} 
O. Kowalski and M. Sekizawa, 
The Riemann extensions with cyclic parallel Ricci tensor,
{\it Math. Nachr. \bf287} (2014), 955--961. 

\bibitem{Op04}  
B. Opozda, 
A classification of locally homogeneous connections on 2-dimensional manifolds,
{\it Differential Geom. Appl.} {\bf 21} (2004), 173--198.

\bibitem{Opozda}
B. Opozda, 
Locally homogeneous affine connections on compact surfaces,
\emph{Proc. Amer. Math. Soc.} \textbf{132} (2004), 2713--2721.

\bibitem{V13}  
A. Van\v zurov\'a, 
On metrizability of locally homogeneous affine 2-dimensional manifolds,
{\it Arch. Math. (Brno) \bf49} (2013), 347--357.
\end{thebibliography}
\end{document}